\def\subsection{\@startsection{subsection}{2}
  \z@{.5\linespacing\@plus.7\linespacing}{.5\linespacing}
  {\normalfont\bfseries}}
\def\@defaultbiblabelstyle#1{[#1]}
\def\@setauthors{
  \begingroup
  \def\thanks{\protect\thanks@warning}
  \trivlist
  \centering\footnotesize \@topsep30\p@\relax
  \advance\@topsep by -\baselineskip
  \item\relax
  \author@andify\authors
  \def\\{\protect\linebreak}
  \authors
  \ifx\@empty\contribs
  \else
    ,\penalty-3 \space \@setcontribs
    \@closetoccontribs
  \fi
  \endtrivlist
  \endgroup
}
\def\@settitle{\begin{center}
  \baselineskip14\p@\relax
    \bfseries
  \@title
  \end{center}
}
\newtheorem{theorem}{Theorem}[section]
\newtheorem{lemma}[theorem]{Lemma}
\newtheorem{proposition}[theorem]{Proposition}
\newtheorem{corollary}[theorem]{Corollary}
\newtheorem{question}[theorem]{Question}
\theoremstyle{definition}
\newtheorem{notation}[theorem]{Notation}
\theoremstyle{remark}
\newtheorem{remark}[theorem]{Remark}
\numberwithin{equation}{section}
\begin{document}

\title[Weyl group symmetries]{Weyl group symmetries of the toric variety associated with Weyl chambers}

\author{Tao Gui}
\address{(Tao Gui) \newline \indent Beijing International Center for Mathematical Research, Peking University, No.\ 5 Yiheyuan Rd, Haidian District, Beijing 100871, China}
\email{guitao18(at)mails(dot)ucas(dot)ac(dot)cn}

\author{Hongsheng Hu}
\address{(Hongsheng Hu) \newline \indent School of Mathematics, Hunan University, Changsha 410082, China}
\email{huhongsheng(at)hnu(dot)edu(dot)cn}

\author{Minhua Liu}
\address{(Minhua Liu) \newline \indent School of Mathematical Sciences, Shanghai Jiao Tong University, 800 Dongchuan RD, Minhang District, Shanghai 200240, China}
\email{mliu8646(at)gmail(dot)com}

\subjclass[2020]{Primary 13A50; Secondary 14M25, 17B22, 52B05}

\keywords{cohomology of toric varieties, root systems, Weyl group actions, permutohedra, dominant weight polytopes}

\begin{abstract}
For any crystallographic root system, let $W$ be the associated Weyl group, and let $\mathit{WP}$ be the weight polytope (also known as the $W$-permutohedron) associated with an arbitrary strongly dominant weight. The action of $W$ on $\mathit{WP}$ induces an action on the toric variety $X(\mathit{WP})$ associated with the normal fan of $\mathit{WP}$, and hence an action on the rational cohomology ring $H^*\left(X(\mathit{WP})\right)$. Let $P$ be the corresponding dominant weight polytope, which is a fundamental region of the $W$-action on $\mathit{WP}$. We give a type uniform algebraic proof that the fixed subring $H^*\left(X(\mathit{WP})\right)^{W}$ is isomorphic to the cohomology ring $H^*\left(X(P)\right)$ of the toric variety $X(P)$ associated with the normal fan of $P$. Notably, our proof applies to all finite (not necessarily crystallographic) Coxeter groups, answering a question of Horiguchi--Masuda--Shareshian--Song about noncrystallographic root systems.
\end{abstract}

\maketitle

\setcounter{tocdepth}{2}

\section{Introduction}

Let $\Phi$ be a (reduced finite) crystallographic root system of rank $r$ in the Euclidean space $E$ and $W$ be the corresponding Weyl group. Let $\lambda$ be any strongly dominant integral weight and $\mathit{WP}:=\operatorname{Conv} (W \lambda)$ be the convex hull of the $W$-orbit $W \lambda$ in $E$, known as the weight polytope or $W$-permutohedron associated with $\lambda$. The normal fan of $\mathit{WP}$ is the complete simplicial fan with cones given by Weyl chambers and their faces, hence it determines a (smooth) projective toric variety $X(\mathit{WP})$. The variety $X(\mathit{WP})$ can also be realized as the generic orbit closure of the maximal torus action on the corresponding flag variety \cite{klyachko1985orbits} and it is also a special kind of Hessenberg variety \cite{de1992hessenberg}. In type $A$, the corresponding variety is usually called the permutohedral variety and it is crucial in the recent development of the combinatorial algebraic geometry around matroids, see \cite{huh2014rota} and \cite{baker2018hodge} for instance.

The reflection action of $W$ on $E$ preserves $\mathit{WP}$ and so induces an action of $W$ on $X(\mathit{WP})$. The induced (graded) representation of $W$ on the rational cohomology ring $H^*\left(X(\mathit{WP})\right)$ has been well-studied, see \cite{Procesi90}, \cite{stembridge1994some}, \cite{dolgachev1994character} and \cite{lehrer2008rational}. Each of these papers provides a different perspective. 

On the other hand, one can consider the fundamental region $\mathit{WP} / W$ of the $W$-action on $\mathit{WP}$, which can be identified with the dominant weight polytope $P$, defined as 
\begin{equation*}
       P := \operatorname{Conv} (W \lambda) \cap \overline{C_+} \subset E, 
\end{equation*}
where $\overline{C_+}$ is the closure of the dominant Weyl chamber $C_+$. That is, $P$ is also a convex polytope obtained from intersecting the cone $\overline{C_+}$ with the convex set $\operatorname{Conv} (W \lambda)$. See \cite[fig.\ 1]{BGHpolytope} for some examples.
The polytope $P$ is studied in a previous work \cite{BGHpolytope}, where the authors showed that $P$ is also a rational (with respect to the root lattice) simple polytope and it is combinatorially equivalent to the $r$-dimensional cube. Therefore, its normal fan is also an complete simplicial fan which also determines a projective toric variety $X(P)$, which is usually not smooth (it is actually a toric orbifold). It natural to ask the following question (which is a special case of \cite[Question 8.3]{HMSS21-toric-orbifolds}).

\begin{question} \label{ques}
    Is the quotient $X(\mathit{WP}) / W$ isomorphic to $X(P)$?
\end{question}

In \cite{Blume15}, Blume studied $X(P)$ (in which he called it the toric orbifold associated to the Cartan matrix) and showed that the above question has positive answers in types $A_r$, $B_r$, and $C_r$ by investigating variant moduli stacks of pointed chains of $\mathbb{P}^1$ related to the Losev--Manin moduli spaces.

Note that we have the classical well-known result
$$
H^*(X / W ; \mathrm{k}) \cong H^*(X ; \mathrm{k})^W
$$
for any locally compact Hausdorff $W$-space for some finite group $W$ and for any field $\mathrm{k}$ of characteristic prime to the order of $W$. One purpose of this paper is to give a type uniform proof of the following theorem, which provides cohomological evidence of the above question.

\begin{theorem} \label{thm-main}
   Let $\Phi$ be a root system of any Lie type, then there exists an explicit graded ring isomorphism
   \begin{equation} \label{iso}
     H^*\left(X(P) ; \mathbb{Q}\right) \cong H^*\left(X(\mathit{WP}) ; \mathbb{Q}\right)^W.      
   \end{equation}
\end{theorem}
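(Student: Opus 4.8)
The plan is to prove the isomorphism by writing down both rings via the Danilov--Jurkiewicz presentation and exhibiting the map explicitly. Recall that for a complete simplicial fan $\Sigma$ with primitive ray generators $u_1,\dots,u_n$ one has $H^*(X(\Sigma);\mathbb{Q})\cong\mathbb{Q}[y_1,\dots,y_n]/(\mathcal I_\Sigma+\mathcal J_\Sigma)$, where $\mathcal I_\Sigma$ is the Stanley--Reisner ideal and $\mathcal J_\Sigma$ the linear ideal generated by $\sum_k\langle m,u_k\rangle y_k$ for $m\in E$, with $\deg y_k=2$; this presentation is purely combinatorial and may serve as the \emph{definition} of $H^*$ in the noncrystallographic case. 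For $\Sigma_{\mathit{WP}}$ the rays are the $W$-orbits $\mathcal O_i=W\omega_i^\vee$ of fundamental coweights, $W$ acts by $w\cdot y_u=y_{wu}$, and since $|W|$ is invertible the Reynolds operator identifies $H^*(X(\mathit{WP}))^W$ with $\mathbb{Q}[y_u]^W/(\mathcal I+\mathcal J)^W$. For $\Sigma_P$, using that $P$ is combinatorially a cube \cite{BGHpolytope}, there are $2r$ rays: the outer generators $\omega_i^\vee$ (facets of $\mathit{WP}$ through the dominant vertex $\lambda$) and the inner generators $-\alpha_i$ (walls of $\overline{C_+}$). Since $\lambda$ is strongly dominant the edges of $P$ at $\lambda$ point in the directions $-\alpha_i$ and each crosses exactly the wall $\alpha_i^{\perp}$, so $a_i\leftrightarrow\omega_i^\vee$ is opposite to $b_i\leftrightarrow-\alpha_i$ and $\mathcal I_P=(a_ib_i:1\le i\le r)$.

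Next I would define the candidate map. Put $\sigma_i:=\sum_{u\in\mathcal O_i}[D_u]\in H^2(X(\mathit{WP}))^W$ (the orbit sum) and set $\phi\colon H^*(X(P))\to H^*(X(\mathit{WP}))^W$ by $a_i\mapsto\sigma_i$ and $b_i\mapsto\sum_j(\omega_i^\vee,\omega_j^\vee)\sigma_j$. This is forced by the support-function model $H^2(X(\mathit{WP}))^W\cong E$, sending a $W$-invariant piecewise-linear function to its linearization $\xi$ on $\overline{C_+}$; continuity across the simple walls is automatic because $\xi-s_i\xi$ is a multiple of $\alpha_i$, and $E^W=0$ makes this an isomorphism. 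Under it $\sigma_i\leftrightarrow\alpha_i$ while the chosen image of $b_i\leftrightarrow\omega_i^\vee$, so the linear relations $\mathcal J_P$ hold identically since $\{\alpha_i\}$ and $\{\omega_i^\vee\}$ are dual bases: $\sum_i\langle m,\omega_i^\vee\rangle\alpha_i=m=\sum_i\langle m,\alpha_i\rangle\omega_i^\vee$. Thus $\phi$ respects the linear part and is an isomorphism in degree $2$.

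The only substantive relation is the quadratic Stanley--Reisner relation, namely that $\phi(a_i)\phi(b_i)=\sigma_i\cdot f_{\omega_i^\vee}=0$ in $H^4$, where $f_{\omega_i^\vee}$ denotes the invariant class with linearization $\omega_i^\vee$. The key observation I would exploit is a local-linearity statement: the stabilizer $W_i=\langle s_j:j\ne i\rangle$ fixes $\omega_i^\vee$, so on every maximal cone containing the ray $\omega_i^\vee$ the invariant function $f_{\omega_i^\vee}$ has the \emph{same} linearization $(\omega_i^\vee,\cdot)$; hence $f_{\omega_i^\vee}$ is linear on the entire star of $\omega_i^\vee$ and therefore restricts to $0$ on the divisor $D_{\omega_i^\vee}$, giving $[D_{\omega_i^\vee}]\cdot f_{\omega_i^\vee}=0$. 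By $W$-invariance the same holds at every $u\in\mathcal O_i$, so $\sigma_i\cdot f_{\omega_i^\vee}=\sum_{u\in\mathcal O_i}[D_u]\cdot f_{\omega_i^\vee}=0$. This makes $\phi$ a well-defined graded ring homomorphism, uniformly in the type, and the argument is entirely combinatorial (restriction to a ray's star = passage to the quotient fan), hence valid for all finite Coxeter groups.

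Finally I would upgrade $\phi$ to an isomorphism. As $H^*(X(P))$ is a Poincaré duality algebra of formal dimension $2r$ and $\phi$ is nonzero on the top degree (the point class $a_1\cdots a_r$ maps to $\sigma_1\cdots\sigma_r=|W|\,[\mathrm{pt}]\ne0$), any nonzero kernel element would multiply to a nonzero top class mapping to $0$, a contradiction; so $\phi$ is injective, whence $\dim_{\mathbb Q}H^*(X(\mathit{WP}))^W\ge\dim_{\mathbb Q}H^*(X(P))=2^r$, the value at $t=1$ of the cube $h$-polynomial $(1+t)^r$. The hard part will be the matching upper bound, equivalently the statement that the invariants are generated in degree $2$: I would obtain it from the invariant Poincaré polynomial $\sum_k\dim H^{2k}(X(\mathit{WP}))^W\,t^k=(1+t)^r$, read off from the known graded $W$-character of $H^*(X(\mathit{WP}))$ \cite{Procesi90,stembridge1994some,dolgachev1994character,lehrer2008rational}, or type-uniformly from the Burnside-type identity $\dim H^*(X(\mathit{WP}))^W=\tfrac1{|W|}\sum_{w\in W}\chi\big(X(\mathit{WP})^w\big)$ together with a count of $w$-fixed cones in the Coxeter fan. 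With both total dimensions equal to $2^r$, the injection $\phi$ is forced to be an isomorphism, answering the Horiguchi--Masuda--Shareshian--Song question in all types at the same time.
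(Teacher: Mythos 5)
Your proposal is correct, and your map is in substance the paper's map: your orbit sum $\sigma_i$ is exactly their $\phi(x_i)=\sum_{w\in W^i}X_{i,w}$, and your class $f_{\omega_i^\vee}$ (the $W$-invariant class with linearization $\omega_i^\vee$ on the dominant chamber) agrees, up to the rescaling coming from using $-\alpha_i$ instead of the primitive generators $-\beta_i$ of Theorem~\ref{thm-pres-P} (a harmless normalization over $\mathbb{Q}$, though note the paper is careful here because in types $A_1$ and $B_r$ the simple coroots are not primitive in $\Lambda_r^*$), with their $\phi(y_i)=\sum_{j,w}c_{i,j,w}X_{j,w}$. Likewise your local-linearity argument for $\sigma_i\cdot f_{\omega_i^\vee}=0$ (invariance of the linearization over the star of each ray $w\omega_i^\vee$, then the Stanley--Reisner relations kill everything outside the star) is the piecewise-linear-function formulation of their Proposition~\ref{lem-ker-IJ}, where the same vanishing is expressed as $c_{i,j,w}=0$ whenever $\widetilde{Q}_i\cap w\widetilde{Q}_j\neq\emptyset$ and proved via Lemma~\ref{lem-facet}; arguably your formulation makes the geometry more transparent. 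The genuine difference is the endgame. The paper proves \emph{surjectivity} structurally: its most technical result, Lemma~\ref{lem-T-coh-gen}, shows by a combinatorial induction on monomials that the invariants of the Stanley--Reisner (equivariant) ring are generated by the orbit sums, and then injectivity follows from the dimension count $\dim H^*(X(P);\mathbb{Q})=2^r=\dim H^*(X(\mathit{WP});\mathbb{Q})^W$ (Lehrer for the right-hand side). You instead prove \emph{injectivity} structurally, using that $H^*(X(P);\mathbb{Q})$ is a Poincar\'e duality algebra and that $\phi(a_1\cdots a_r)=\sigma_1\cdots\sigma_r=|W|[\mathrm{pt}]\neq 0$, and then get surjectivity from the same dimension count. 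Both arrangements lean on the same external input (the invariant Poincar\'e polynomial $(1+t)^r$ from Procesi/Stembridge/Lehrer), so your route buys you a bypass of Lemma~\ref{lem-T-coh-gen} at the cost of invoking rational Poincar\'e duality for the toric orbifold $X(P)$ (and, if you want the finite-Coxeter generalization of Section~\ref{sec-gen}, the McMullen--Brion duality for the virtual ring $R(P)$); the paper itself flags essentially this alternative in the remark following Lemma~\ref{lem-dim}. One caution: your suggested Burnside/Lefschetz fixed-cone count as a type-uniform substitute for the citation of Lehrer is only a sketch and would need to be carried out to make your proof self-contained, whereas the cited character computations are what both you and the paper actually rely on.
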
 

The above theorem is proved for the classical types $A_r, B_r, C_r$, and $D_r$ case by case in \cite{HMSS21-toric-orbifolds} (actually, they proved a more general parabolic version), and it is proved for rank-$2$ root systems (moreover, for any reflection symmetry of them) in \cite{Song22}. Furthermore, it can be deduced for any root system from the fact that both sides of Eq.~\eqref{iso} are isomorphic to the rational cohomology of the corresponding so-called Peterson variety, see for example, \cite{abe2023peterson} and \cite{zbMATH07273426}. Actually, in a recent preprint \cite{gong2024homotopytypestoricorbifolds}, the author constructed a homotopy equivalence between the underlying topological spaces $X(\mathit{WP}) / W$ and $X(P)$ (and also for a more general parabolic version), hence it induces an isomorphism on their cohomologies. 
However, we do not know how to make this isomorphism explicit and do not know whether it agrees with the isomorphism constructed in the proof of Theorem \ref{thm-main}.
We emphasize that our aim is to provide a type uniform algebraic proof not depending on the classification of root systems and understand it in the world of toric varieties without introducing the Peterson varieties, which in fact works for noncrystallographic root systems (see Section \ref{sec-gen}). As in that case there is no variety to apply the geometric or topological technique, our algebraic point of view focusing on the rings with presentations provides a different useful perspective.

Our proof relies on an explicit and type uniform presentation of $H^*\left(X(P) ; \mathbb{Q}\right)$. This result follows from analyzing the normal fan of the dominant weight polytope $P$
and then applying Danilov's theorem on the presentation of the cohomology of arbitrary toric orbifolds \cite{danilov1978geometry}. 

\begin{theorem} \label{thm-pres-P}
    The torus equivariant cohomology ring $H^*_T(X(P) ; \mathbb{Q})$ has the following presentation 
    \begin{equation} 
      H^*_T\left(X(P) ; \mathbb{Q}\right)=\mathbb{Q}\left[x_1, \ldots, x_r, y_1, \ldots, y_r\right] /\mathcal{I},
    \end{equation}
    and the cohomology ring $H^*\left(X(P) ; \mathbb{Q}\right)$ has the following presentation
    \begin{equation} \label{eq-pres-P}
      H^*\left(X(P) ; \mathbb{Q}\right)=\mathbb{Q}\left[x_1, \ldots, x_r, y_1, \ldots, y_r\right] /\left(\mathcal{I}+\mathcal{J}\right),
    \end{equation}
    where $\mathcal{I}$ is the ideal generated by
    \[\{x_i y_i \mid i \in [r]\},\]
    and $\mathcal{J}$ is the ideal generated by 
    \[\left\{\sum_{i=1}^r\left\langle q, \varpi_i\spcheck \right\rangle x_i+\sum_{i=1}^r\left\langle q, -\beta_i \right\rangle y_i \Biggm\vert q \in \Lambda_r\right\}.\]
    Here $\Lambda_r$ denotes the root lattice and $\beta_i$ denotes the primitive vector in the coweight lattice $\Lambda_r^*$ in the same direction as $\alpha_i\spcheck$.
\end{theorem}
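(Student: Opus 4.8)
The plan is to reduce the statement to a concrete description of the normal fan $\Sigma$ of $P$ and then invoke Danilov's theorem \cite{danilov1978geometry}: for the toric orbifold attached to a complete simplicial fan $\Sigma$ in $N_{\mathbb{R}}$ with primitive ray generators $\{v_\rho\}$, the equivariant cohomology $H^*_T(X(\Sigma);\mathbb{Q})$ is the face (Stanley--Reisner) ring $\mathbb{Q}[x_\rho]/\mathcal{I}_{\mathrm{SR}}$, where $\mathcal{I}_{\mathrm{SR}}$ is generated by the monomials $\prod x_\rho$ over the minimal sets of rays that do not span a cone of $\Sigma$, and $H^*(X(\Sigma);\mathbb{Q})$ is the further quotient by the linear ideal generated by $\{\sum_\rho \langle q, v_\rho\rangle x_\rho : q \in M\}$, where $M = N^*$. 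Here $N = \Lambda_r^*$ is the coweight lattice (the polytope $P$ is rational with respect to the root lattice by \cite{BGHpolytope}), so that $M = \Lambda_r$ is the root lattice. Thus everything comes down to (i) listing the rays of $\Sigma$ together with their primitive generators, and (ii) determining the minimal non-faces.

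For step (i), the facets of $P = \operatorname{Conv}(W\lambda)\cap\overline{C_+}$ are of two kinds. First, $r$ facets lie on the walls of $\overline{C_+}$; the facet on the $i$-th wall has outward primitive normal $-\beta_i$, and we name the corresponding variable $y_i$. Second, there are the facets coming from facets of $\mathit{WP}$ that meet the interior of $\overline{C_+}$. To pin these down I would compute the normal cone of $\mathit{WP}$ at the strongly dominant vertex $\lambda$: since $\lambda - w\lambda$ is a nonnegative combination of positive roots for every $w \in W$, this normal cone is precisely the dominant chamber $\overline{C_+}$, whose extreme rays are generated by the fundamental coweights $\varpi_1\spcheck, \ldots, \varpi_r\spcheck$. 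Consequently the $r$ facets of $\mathit{WP}$ through $\lambda$ have primitive outward normals $\varpi_i\spcheck$ (these are primitive since the fundamental coweights form a $\mathbb{Z}$-basis of $\Lambda_r^*$), and because $\lambda$ lies in the interior of $\overline{C_+}$ the polytope $P$ coincides with $\mathit{WP}$ near $\lambda$, so these facets survive in $P$; name their variables $x_1,\ldots,x_r$. By the main result of \cite{BGHpolytope}, $P$ is combinatorially an $r$-cube and hence has exactly $2r$ facets, so the $2r$ facets just described exhaust all of them.

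For step (ii), since $P$ is combinatorially a cube, the minimal non-faces of $\Sigma$ are exactly the $r$ pairs of opposite facets, so $\mathcal{I}_{\mathrm{SR}}$ is generated by the $r$ products of the two opposite-facet variables. It remains to match the pairing, i.e.\ to show that the wall facet $y_i$ is opposite to the outer facet $x_i$. I would verify this by tracing the edge of $P$ emanating from $\lambda$ along the direction $-\alpha_i$: on one hand $\langle -\alpha_i, \varpi_j\spcheck\rangle = -\delta_{ij}$ shows this edge leaves exactly the facet $x_i$ and stays on all $x_j$ with $j\ne i$; on the other hand $s_i\lambda = \lambda - \langle\lambda,\alpha_i\spcheck\rangle\alpha_i$ lies strictly on the negative side of the $i$-th wall, so this edge of $P$ terminates on that wall, i.e.\ on the facet $y_i$. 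Hence the edge leaving $x_i$ arrives on $y_i$, which for a cube forces $x_i$ and $y_i$ to be the opposite pair, and therefore $\mathcal{I}_{\mathrm{SR}} = \mathcal{I} = (x_iy_i : i \in [r])$.

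Assembling the pieces, Danilov's theorem gives $H^*_T(X(P);\mathbb{Q}) = \mathbb{Q}[x_1,\ldots,x_r,y_1,\ldots,y_r]/\mathcal{I}$, and passing to ordinary cohomology quotients by the linear relations $\sum_i \langle q, \varpi_i\spcheck\rangle x_i + \sum_i \langle q, -\beta_i\rangle y_i$ for $q \in \Lambda_r$, which is exactly $\mathcal{J}$; this yields \eqref{eq-pres-P}. I expect the geometric heart of the argument to be steps (i)--(ii): correctly identifying the outer facet normals with the fundamental coweights via the normal cone at $\lambda$, checking primitivity so that Danilov's formula applies verbatim, and confirming the opposite-facet pairing. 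Once the combinatorial cube structure of \cite{BGHpolytope} is granted, the remainder is a direct and type-uniform application of Danilov's theorem.
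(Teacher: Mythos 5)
Your proposal follows the same overall route as the paper: both arguments rest on the cube combinatorics of $P$ established in \cite{BGHpolytope} and then apply Danilov's presentations (Theorems \ref{thm-pres-T-coh} and \ref{thm-pres-coh}) to the normal fan, with the rays $\varpi_i\spcheck$ and $-\beta_i$ and the minimal non-faces given by the $r$ opposite pairs. The difference is in how the facet data are justified. The paper simply quotes from \cite{BGHpolytope} the explicit facet list (the $Q_i$ through $\lambda$ and the $C_i$ on the walls) together with the combinatorial equivalence with $[0,1]^r$ that sends $Q_i$ and $C_i$ to opposite facets, so the pairing giving $\mathcal{I}$ is immediate; it then spends its remaining effort making $\beta_i$ explicit via the Cartan matrices (types $A_1$ and $B_r$ are the exceptional cases) and reduces the reducible case to the irreducible one by a product decomposition. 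You instead re-derive the outer facet normals from the normal cone of $\mathit{WP}$ at $\lambda$ (correct: that cone is the chamber spanned by $\varpi_1\spcheck,\dots,\varpi_r\spcheck$, and $P$ agrees with $\mathit{WP}$ near the strongly dominant vertex $\lambda$) and re-derive the opposite pairing by tracing the edge of $P$ leaving $\lambda$ in direction $-\alpha_i$; this edge argument is a nice self-contained substitute for the quoted combinatorial equivalence, and it is valid since $\langle \alpha_i,\alpha_j\spcheck\rangle \le 0$ for $j \ne i$ guarantees that the segment from $\lambda$ toward $s_i\lambda$ crosses only the $i$-th wall. One genuine (though easily repaired) gap remains in your facet count: you assert but do not verify that each of the $r$ walls of $\overline{C_+}$ actually supports a facet of $P$, and without this the count of $2r$ could a priori be completed by facets of $\mathit{WP}$ that meet the interior of $\overline{C_+}$ without containing $\lambda$, which your list would miss. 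This is fixed in one line --- $0$ is an interior point of $\mathit{WP}$ lying on all $r$ walls, so near $0$ the polytope $P$ coincides with the cone $\overline{C_+}$ --- or by citing the explicit facet description of \cite{BGHpolytope}, as the paper does. Finally, note that the paper states the cube result for irreducible $\Phi$ and passes to the general case by factoring everything as a direct product; your write-up should either include that reduction or check that the sources you cite cover reducible root systems.
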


Using the above presentation, we define an explicit homomorphism from $H^*(X(P) ; \mathbb{Q})$ to $H^*(X(\mathit{WP}) ; \mathbb{Q})^W$, and prove that it is indeed an isomophism using results of Procesi \cite{Procesi90} and Lehrer \cite{lehrer2008rational}.

\begin{remark} \leavevmode
  \begin{enumerate}
      \item The presentations in Theorem \ref{thm-pres-P} are already known in \cite[section~6.5]{abe2023peterson}, although they only work with the fan and do not involve the dominant weight polytope. Here we point our that the fan in their paper is indeed the normal fan of a dominant weight polytope. We also note that they do not use the primitive vectors $\beta_i$'s. However, by replacing $y_i$ by a suitable multiple, it is easy to see that our presentation is equivalent to theirs.
      \item As mentioned, $H^*(X(P) ; \mathbb{Q})$ is isomorphic to the cohomology $H^*(\mathit{Pet} ; \mathbb{Q})$ of the corresponding Peterson variety $\mathit{Pet}$.
    In \cite{HHM15}, a presentation of $H^*(\mathit{Pet} ; \mathbb{Q})$ is given. 
    The relationship between this presentation and the one in \eqref{eq-pres-P} is discussed in \cite[section~6.5]{abe2023peterson}.
      \item As mentioned above, Question \ref{ques} is a special case of \cite[Question 8.3]{HMSS21-toric-orbifolds}: is the quotient $X(\mathit{WP}) / W_K$ isomorphic to $X(\mathit{WP}/W_K)$ for any parabolic subgroup $W_K \subset W$?
        It is desirable to consider its cohomology version, that is, Theorem \ref{thm-main} for parabolic subgroups.
        However, this might be more difficult. 
        In the full Weyl group case, we have the presentations in Theorem \ref{thm-pres-P} thanks to the result \cite{BGHpolytope} on the combinatorial structure of the polytope $P$ ($=\mathit{WP} / W$). 
        The combinatorics and the corresponding cohomology rings for parabolic cases are more complicated, which might be appropriately pursued in future work.
  \end{enumerate}
\end{remark}

The crystallographic condition on the root systems allows the construction of the toric varieties $X(P)$ and $X(\mathit{WP})$. However, it is well-known that one can construct the ``virtual cohomology ring'' for any complete simplicial fan which is a quotient of the corresponding Stanley--Reisner ring by a linear ideal, see \eqref{eq-def-R(P)} and \eqref{eq-def-R(WP)}. It is natural to ask whether Theorem \ref{thm-main} holds in this general setting, see \cite[Question 8.2]{HMSS21-toric-orbifolds}). Our algebraic proof of Theorem \ref{thm-main} actually works in this generality after minor modifications, see Section \ref{sec-gen}.

\subsection*{Acknowledgments}
The authors would like to thank Connor Simpson and Haozhi Zeng for useful discussions.
The authors are grateful to the anonymous referee for constructive suggestions which helped to improve this paper.
The second author is supported by the Fundamental Research Funds for the Central Universities (No.~531118010972).

\section{Preliminaries}

\subsection{Notations}

In this section, we collect our notations for later use. 

$E$: The $r$-dimensional Euclidean space where the root system $\Phi$ lives.

$E^*$: The dual space of $E$, where the coweight lattice lives.

$\Lambda_r$: The root lattice $\mathbb{Z} \Phi$.

$\Lambda_r^*$: The coweight lattice, which is dual to $\Lambda_r$.

$\langle \cdot, \cdot \rangle$: The natural pairing between $E$ and $E^*$, as well as between $\Lambda_r$ and $\Lambda_r^*$.

$\alpha_1, \dots, \alpha_r$: Simple roots, which form a basis of $\Lambda_r$.

$\varpi_1, \dots, \varpi_r$: fundamental weights.

$\alpha_1\spcheck, \dots, \alpha_r\spcheck$: Simple coroots.

$\varpi_1\spcheck, \dots, \varpi_r\spcheck$: Fundamental coweights, which form a basis of $\Lambda_r^*$.

$\beta_1, \dots, \beta_r$: $\beta_i$ denotes the primitive vector in $\Lambda_r^*$ in the same direction as $\alpha_i\spcheck$.

For a rational (with respect to the root lattice $\Lambda_r$) polytope $Y$ in $E$ and a facet $F$ of $Y$, let $\eta(F) \in \Lambda_r^*$ denote the outward primitive normal vector of $F$. 

$[r] := \{1,\dots, r\}$.

$e$: The identity element of the Weyl group $W$.

\subsection{Cohomology rings of toric orbifolds}

A toric variety is an algebraic variety containing an algebraic torus as an open dense subset, such that the action of the torus on itself extends to the whole variety. One can construct a normal toric variety from a fan, which is a collection of polyhedral rational cones closed under taking intersections and faces.

Let $Y$ be a rational simple convex polytope in $E$ with facets $F_1, \ldots, F_m$. Let $\eta(F_i) \in \Lambda_r^*$ be the outward primitive normal vector of $F_i$. The (outer) normal fan $\Sigma_Y$ of $Y$ consists of rational convex polyhedral cones in $E^*$ generated by those sets $\{\eta(F_{i_1}), \ldots, \eta(F_{i_k})\}$ for which the intersection $F_{i_1}\cap\cdots\cap F_{i_k}$ is nonempty. Note that $\Sigma_Y$ is a rational simplicial fan, that is, each $i$-dimensional cone is generated by $i$ rational vectors. Let $X(Y)$ be the complex projective toric variety associated with $\Sigma_Y$ of complex dimension $r$, which is a rationally smooth orbifold. See, for example, \cite[sections 2.3 and 5.1]{cox2024toric} for explicit definitions and constructions of $X(Y)$.

According to the Jurkiewicz--Danilov theorem, the homomorphism 
\[
\mathbb{Q}[x_1, \ldots, x_m]\to H^*(X(Y) ; \mathbb{Q})
\]
 sending $x_i$ to the class $[D_i]$ of the torus-invariant divisor associated with $F_i$ is surjective and induces an isomorphism of rings as follows. 

\begin{theorem} [{\cite[Theorem 10.8 and Remark 10.9]{danilov1978geometry}}] \label{thm-pres-coh}
    The cohomology ring $H^*(X(Y) ; \mathbb{Q})$ is given by
\[
H^*(X(Y) ; \mathbb{Q})\simeq \mathbb{Q}[x_1, \ldots, x_m]/(\mathcal{I}+\mathcal{J}),
\]
where each variable $x_1, \ldots, x_m$ is of degree $2$, $\mathcal{I}$ is the square-free monomial ideal generated by $x_{i_1}\cdots x_{i_s}$ with $i_j$ distinct and $F_{i_1}\cap\cdots\cap F_{i_s} = \varnothing$ and $\mathcal{J}$ is the ideal generated by the linear forms $\sum_{i=1}^{m}\langle q, \eta(F_i)\rangle x_i$ where $q$ ranges over $\Lambda_r$.
\end{theorem}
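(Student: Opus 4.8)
The plan is to derive the presentation from the general structure theory of complete simplicial toric varieties, working throughout with rational coefficients so that the orbifold singularities of $X(Y)$ become cohomologically invisible. Write $u_i := \eta(F_i) \in \Lambda_r^*$ for the primitive ray generators of the normal fan $\Sigma_Y$, let $D_i$ be the torus-invariant prime divisor attached to $F_i$, and let $\phi \colon \mathbb{Q}[x_1, \dots, x_m] \to H^*(X(Y);\mathbb{Q})$ be the ring map with $\phi(x_i) = [D_i]$. There are three things to establish: (i) $\phi$ is surjective and the cohomology is concentrated in even degrees; (ii) $\mathcal{I} + \mathcal{J} \subseteq \ker\phi$; and (iii) $\ker\phi \subseteq \mathcal{I} + \mathcal{J}$, which I would obtain from a dimension count rather than by producing relations by hand.

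For (i), I would invoke the Bia\l ynicki--Birula decomposition for a generic one-parameter subgroup of $T$ (equivalently, Morse theory for a generic linear functional on $Y$). Its fixed loci are the torus-fixed points of $X(Y)$, indexed by the vertices of $Y$, and it exhibits $X(Y)$ as a disjoint union of locally closed affine cells of even real dimension. This at once gives $H^{\mathrm{odd}}(X(Y);\mathbb{Q}) = 0$, shows that $H^*(X(Y);\mathbb{Q})$ is free with Poincar\'e polynomial equal to the $h$-polynomial of $Y$, and---since each cell closure is a torus-orbit closure, a transverse intersection of some of the $D_i$---shows that the classes $[D_i]$ generate, i.e.\ $\phi$ is surjective.

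For (ii), the monomial relations hold because $F_{i_1} \cap \dots \cap F_{i_s} = \varnothing$ means precisely that $\{u_{i_1}, \dots, u_{i_s}\}$ spans no cone of $\Sigma_Y$, so $D_{i_1} \cap \dots \cap D_{i_s} = \varnothing$ and $[D_{i_1}] \cdots [D_{i_s}] = 0$. The linear relations hold because, for $q \in \Lambda_r = M$, the character $\chi^q$ has principal divisor $\operatorname{div}(\chi^q) = \sum_i \langle q, u_i \rangle D_i$, which is rationally, hence cohomologically, trivial; thus $\phi(\sum_i \langle q, u_i\rangle x_i) = 0$. Therefore $\phi$ descends to a surjection $\bar\phi \colon \mathbb{Q}[x_1, \dots, x_m]/(\mathcal{I} + \mathcal{J}) \twoheadrightarrow H^*(X(Y);\mathbb{Q})$.

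The crux is (iii), which I would settle by matching Hilbert functions. Because $Y$ is simple, the simplicial complex recording which facets meet (the boundary complex of the dual simplicial polytope) is a simplicial $(r-1)$-sphere, so the Stanley--Reisner ring $\mathbb{Q}[x_1, \dots, x_m]/\mathcal{I}$ is Cohen--Macaulay of Krull dimension $r$ by Reisner's criterion. Since the $u_i$ span $\Lambda_r^* \otimes \mathbb{Q}$, the $r$ linear forms coming from a basis of $M$ generate $\mathcal{J}$ and form a linear system of parameters, so $\mathbb{Q}[x]/(\mathcal{I} + \mathcal{J})$ is a finite-dimensional graded algebra whose Hilbert series is, by Stanley's theorem, exactly the $h$-polynomial of $Y$. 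Comparing with the Poincar\'e polynomial from (i)---also the $h$-polynomial---shows that the graded surjection $\bar\phi$ is an isomorphism in every degree, hence a ring isomorphism. The main obstacle is thus the clean identification of both the topological Betti numbers and the algebraic Hilbert function with the single combinatorial invariant $h(Y)$, together with the checks that over $\mathbb{Q}$ the orbifold $X(Y)$ enjoys Poincar\'e duality and that $\mathcal{J}$ really is a system of parameters. A cleaner packaging of (i) and (iii) at once is the equivariant route: over $\mathbb{Q}$, equivariant formality plus the piecewise-polynomial (GKM) description identifies $H^*_T(X(Y);\mathbb{Q})$ with the face ring $\mathbb{Q}[x_1, \dots, x_m]/\mathcal{I}$, and reducing along $H^*_T(\mathrm{pt};\mathbb{Q}) = \operatorname{Sym}(\Lambda_r \otimes \mathbb{Q}) \to H^*_T(X(Y);\mathbb{Q})$, $q \mapsto \sum_i \langle q, u_i\rangle x_i$, produces $H^*(X(Y);\mathbb{Q})$ as the quotient by $\mathcal{J}$.
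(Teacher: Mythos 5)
The paper does not prove this statement at all: it is imported verbatim from Danilov \cite[Theorem 10.8 and Remark 10.9]{danilov1978geometry}, so there is no internal argument to compare against, and your proposal is a genuine proof sketch of the cited result. It follows the standard route (essentially \cite[section 5.2]{fulton1993introduction} and \cite[section 12.4]{cox2024toric}) and is sound in outline, but two steps need repair before it is complete. First, the Bia{\l}ynicki--Birula theorem in its usual form requires smoothness, whereas $X(Y)$ is only an orbifold; one should instead use the explicit toric description of the plus-cells: for a generic one-parameter subgroup the cell attached to the fixed point $x_\sigma$ is $V(\tau)\cap U_\sigma$ for a suitable face $\tau$ of $\sigma$, which in the simplicial case is a finite quotient of an affine space, not an affine cell. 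Over $\mathbb{Q}$ such quotients are still acyclic, so the Borel--Moore filtration argument delivers odd vanishing, Betti numbers equal to the $h$-vector, and---using that the cell closures are exactly the orbit closures $V(\tau)$, together with rational Poincar\'e duality for the $\mathbb{Q}$-homology manifold $X(Y)$---generation by the classes $[D_i]$; your phrase ``affine cells'' glosses over this. Second, your justification that the $r$ linear forms are a linear system of parameters (``since the $u_i$ span'') is too weak as stated: the Kind--Kleinschmidt criterion requires that the ray generators of \emph{each cone} be linearly independent, i.e., simpliciality of $\Sigma_Y$, not merely that all the $u_i$ span $E^*$. Both repairs are routine, so I would classify the proposal as correct modulo standard details. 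Your closing equivariant alternative is in fact the cleanest and closest to the literature: it is precisely the companion statement Theorem \ref{thm-pres-T-coh} ($H^*_T$ equals the face ring) followed by reduction modulo $\mathcal{J}$, which is essentially how Danilov's Remark 10.9 obtains the presentation.
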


\begin{theorem}[See, e.g., {\cite[section\ 12.4]{cox2024toric}}] \label{thm-pres-T-coh}
    Let $H^*_T(X(Y) ; \mathbb{Q})$ be the torus equivariant cohomology of $X(Y)$, then we have the following isomorphism of rings
\[
H^*_T(X(Y) ; \mathbb{Q})\simeq \mathbb{Q}[x_1, \ldots, x_m]/\mathcal{I}.
\]
\end{theorem}

The ring $\mathbb{Q}[x_1, \ldots, x_m]/\mathcal{I}$ is the well-known \emph{Stanley--Reisner ring} (or \emph{face ring}) associated with the simplicial fan $\Sigma_Y$ and it has a canonical $\mathbb{Q}$-vector space basis of the monomials supported on the cones of $\Sigma_Y$ \cite[pp.\ 3--5]{zbMATH02190625}.

\subsection{Toric orbifolds associated with dominant weight polytopes}

Note that in \cite{BGHpolytope}, the dominant weight polytope $P$ is showed to be a rational simple polytope, hence the associated toric variety $X(P)$ is an orbifold and we can apply Danilov's Theorem \ref{thm-pres-coh} to compute the cohomology of this toric orbifold.

The normal fan of $P$ is the complete simplicial fan given by the cones 
$$\sigma_{J, K}:=\operatorname{cone}\left(\left\{-\alpha_j^{\vee} \mid j \in J\right\} \cup\left\{\varpi_k^{\vee} \mid k \in K\right\}\right) \text { for disjoint subsets } J, K \subseteq [r].$$

We refer to \cite[sections 3 and 4]{abe2023peterson} for an explicit construction and a proof of projectivity of $X(P)$ and \cite{BGHpolytope} for the combinatorics of the dominant weight polytopes.

 \section{Proof of Main Theorems}

\subsection{Proof of Theorem \ref{thm-pres-P}}

Suppose $\Phi$ is an irreducible root system first. As $\lambda$ is strongly dominant, by the results in \cite{BGHpolytope}, the polytope $P$ is combinatorially equivalent to an $r$-dimensional cube. 
The $2r$ facets (that is, codimension-one faces) of $P$ are 
\[Q_i := \left\{p \in P \Biggm\vert p = \lambda - \sum_{k \ne i} a_k \alpha_k, a_k \in \mathbb{R}_{\ge 0} \right\}, \quad i \in [r],\]
and 
\[C_i := \left\{p \in P \Biggm\vert p = \sum_{k \ne i} b_k \varpi_k, b_k \in \mathbb{R}_{\ge 0} \right\}, \quad i \in [r].\]

Recall that the standard $r$-dimensional cube $[0,1]^r$ has $2r$ facets
\[H_{i,\varepsilon} := \left\{(a_1, \dots, a_r) \in [0,1]^r \mid a_i = \varepsilon \right\}, \quad i \in [r], \varepsilon \in \{0,1\}.\]
The aforementioned combinatorial equivalence between $P$ and $[0,1]^r$ sends $Q_i$ to $H_{i,1}$ and $C_i$ to $H_{i,0}$.

Recall that for $j=1, \dots, r$,  $\alpha_j\spcheck = \sum_{1 \le i \le r} \langle \alpha_i, \alpha_j\spcheck \rangle \varpi_i\spcheck$.
That is, the column vectors of the Cartan matrix $(\langle \alpha_i, \alpha_j\spcheck \rangle)_{i,j}$ of the  root system $\Phi$ are the coordinate vectors of $\alpha_j\spcheck$'s with respect to the basis $\varpi_1\spcheck, \dots, \varpi_r\spcheck$ of $\Lambda_r^*$.
By an examination of Cartan matrices (see, e.g., \cite[Plate I--IX]{Bourbaki-Lie456}), one can see that for an irreducible root system, all the simple coroots are primitive in the lattice $\Lambda_r^*$ except when $\Phi$ is of type $A_1$ or type $B_r$ ($r \ge 2$). 
In $A_1$ case, $\alpha_1\spcheck = 2 \varpi_1\spcheck$; 
In $B_r$ case, $\alpha_1\spcheck, \dots, \alpha_{r-1}\spcheck$ are primitive while $\alpha_r\spcheck = -2 \varpi_{r-1}\spcheck + 2 \varpi_r\spcheck$.
In other words, $\beta_r = \frac{1}{2} \alpha_r\spcheck$ if $\Phi$ is of type $A_1$ or $B_r$ ($r \ge 2$), and $\beta_i = \alpha_i\spcheck$ otherwise.

Let $S$ be a set of facets of $P$. 
The intersection $\bigcap_{F \in S} F$ is empty if and only if $Q_i, C_i \in S$ for some $i \in [r]$, that is, in $S$ there is a pair of facets in opposite positions. By Theorem \ref{thm-pres-T-coh}, we get the presentation of $H^*_T\left(X(P) ; \mathbb{Q}\right)$. Clearly, the outward primitive normal vector of $Q_i$ is $\varpi_i\spcheck$ and the outward primitive normal vector of $C_i$ is $-\beta_i$.
Therefore, by Theorem \ref{thm-pres-coh}, we know that Theorem \ref{thm-pres-P} holds if the root system is irreducible. In this presentation of $H^*\left(X(P) ; \mathbb{Q}\right)$, the variable $x_i$ corresponds to the facet $Q_i$, and $y_i$ to $C_i$.

The general case follows easily from the irreducible case as ``everything'' can be factored as a direct product, see, for example, \cite[p.\ 19--20]{fulton1993introduction}.
The proof of Theorem \ref{thm-pres-P} is complete.

\subsection{Combinatorial description of \texorpdfstring{$\mathit{WP}$}{WP} and \texorpdfstring{$H^*\left(X(\mathit{WP}) ; \mathbb{Q}\right)$}{H*(X(WP);Q)} by the Weyl group}

To begin with, for $i \in [r]$, let 
\[\widetilde{Q}_i : = \left\{p \in \mathit{WP} \Biggm\vert p = \lambda - \sum_{k \ne i} a_k \alpha_k, a_k \in \mathbb{R}_{\ge 0} \right\}.\]
Note that $\eta(\widetilde{Q}_j) = \eta(Q_j)=\varpi_i\spcheck$, as $\widetilde{Q}_j$ and $Q_j$ are contained in the same hyperplane.
Let $W_i$ be the maximal parabolic subgroup of $W$ excluding $s_i$ (the simple  reflection with respect to $\alpha_i$) and $W^i$ be the set of minimal representatives for the cosets $W / W_i$. We have the following lemma.

\begin{lemma} \label{lem-stab}
  \leavevmode
  \begin{enumerate}
      \item \label{lem-stab-1} The stabilizer of $\widetilde{Q}_i$ is $W_i$.
      \item \label{lem-stab-2} If $w \widetilde{Q}_i = \widetilde{Q}_j$ where $w \in W$ and $i,j \in [r]$, then $i = j$ and $w \in W_i$.
  \end{enumerate}
\end{lemma}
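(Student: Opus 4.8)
The lemma is about the faces $\widetilde{Q}_i$ of the permutohedron $WP$. Let me understand the setup.The plan is to understand the faces $\widetilde{Q}_i$ geometrically and then leverage the standard theory of parabolic subgroups and minimal coset representatives. First I would identify $\widetilde{Q}_i$ more explicitly. By definition, $\widetilde{Q}_i$ consists of points $p \in \mathit{WP}$ of the form $\lambda - \sum_{k \ne i} a_k \alpha_k$ with $a_k \ge 0$; since $\eta(\widetilde{Q}_i) = \varpi_i\spcheck$, this facet is the intersection of $\mathit{WP}$ with the supporting hyperplane $\{p : \langle p, \varpi_i\spcheck\rangle = \langle \lambda, \varpi_i\spcheck\rangle\}$, the maximal value of $\langle \cdot, \varpi_i\spcheck\rangle$ on $\mathit{WP}$. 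The key observation is that $\varpi_i\spcheck$ is fixed by exactly the maximal parabolic $W_i$ (the subgroup generated by all simple reflections $s_k$ with $k \ne i$), because $W_i$ is precisely the stabilizer of the fundamental coweight $\varpi_i\spcheck$.

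For part \eqref{lem-stab-1}, I would argue as follows. An element $w \in W$ stabilizes the facet $\widetilde{Q}_i$ if and only if $w$ stabilizes its supporting hyperplane, which (since $w$ acts by isometries preserving $\mathit{WP}$ and permuting faces) happens if and only if $w$ fixes the outward normal direction $\varpi_i\spcheck$, i.e. $w \varpi_i\spcheck = \varpi_i\spcheck$. The stabilizer of $\varpi_i\spcheck$ in $W$ is the standard maximal parabolic $W_i$: indeed $\varpi_i\spcheck$ lies on the wall $\alpha_k = 0$ for every $k \ne i$ and is strictly positive on $\alpha_i$, so its $W$-stabilizer is generated exactly by the reflections fixing it, namely $\{s_k : k \ne i\}$, which is $W_i$. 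One inclusion ($W_i$ stabilizes $\widetilde{Q}_i$) is immediate since each $s_k$ with $k \ne i$ fixes $\varpi_i\spcheck$; the reverse inclusion uses that any $w$ preserving the face must fix its unique primitive outward normal.

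For part \eqref{lem-stab-2}, I would use that distinct $\widetilde{Q}_i$ and $\widetilde{Q}_j$ have distinct outward primitive normals $\varpi_i\spcheck \ne \varpi_j\spcheck$. If $w\widetilde{Q}_i = \widetilde{Q}_j$, then $w$ carries the supporting hyperplane of $\widetilde{Q}_i$ to that of $\widetilde{Q}_j$, hence $w\varpi_i\spcheck = \varpi_j\spcheck$ (outward normals go to outward normals since $w$ preserves $\mathit{WP}$). Because $\{\varpi_1\spcheck, \dots, \varpi_r\spcheck\}$ is a basis, $w$ permutes these fundamental coweights only if the permutation it induces is compatible with the $W$-action; but $w$ acting on the dominant chamber must send the dominant chamber to itself if it fixes the set of fundamental coweights up to reindexing. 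The cleanest route is: both $\varpi_i\spcheck$ and $\varpi_j\spcheck$ are dominant (they lie in the closed dominant chamber), and $w\varpi_i\spcheck = \varpi_j\spcheck$ with both dominant forces $\varpi_i\spcheck = \varpi_j\spcheck$, since each $W$-orbit meets the closed dominant chamber in a single point. This yields $i = j$, and then $w$ fixes $\varpi_i\spcheck$, so $w \in W_i$ by part \eqref{lem-stab-1}.

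I expect the main obstacle to be the careful justification that $w$ preserving the polytope must send outward primitive normals to outward primitive normals and that the $W$-stabilizer of $\varpi_i\spcheck$ is exactly $W_i$ (rather than something larger); the latter is a standard fact about the isotropy of points in a closed chamber — the stabilizer of a point is generated by the reflections in the walls containing it — but it deserves an explicit citation or a short argument. Once the stabilizer computation is pinned down, both parts follow quickly, with part \eqref{lem-stab-2} reducing to the uniqueness of dominant representatives in a $W$-orbit.
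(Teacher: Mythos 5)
Your proof is correct and takes essentially the same route as the paper's: both arguments rest on the $W$-equivariance of the outward primitive normal map, giving $w \varpi_i\spcheck = w\eta(\widetilde{Q}_i) = \eta(w\widetilde{Q}_i) = \eta(\widetilde{Q}_j) = \varpi_j\spcheck$, from which $i = j$ follows by uniqueness of the dominant representative in a $W$-orbit and $w \in W_i$ follows because the stabilizer of $\varpi_i\spcheck$ is exactly the maximal parabolic $W_i$. The only difference is one of exposition: you spell out the standard facts (a point's stabilizer is generated by the simple reflections fixing it; orbits meet the closed chamber once) that the paper compresses into the remark that part (1) follows from $\langle \alpha_i, \varpi_j\spcheck \rangle = \delta_{ij}$.
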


\begin{proof}
    The item \eqref{lem-stab-1} is a standard fact following from $\langle \alpha_i, \varpi_j\spcheck \rangle= \delta_{ij}$.

    If $w \widetilde{Q}_i = \widetilde{Q}_j$, then 
    \[w \varpi_i \spcheck = w \eta(\widetilde{Q}_i) = \eta(w \widetilde{Q}_i) = \eta(\widetilde{Q}_j) = \varpi_j\spcheck.\]
    This implies $i = j$ and $w \in W_i$.
    This proves \eqref{lem-stab-2}.
\end{proof}

By definition, the facets of $\mathit{WP}$ are those $\widetilde{Q}_i$'s and their $W$-translations.
More explicitly, it is easy to see from the above lemma that they are 
\[\{w \widetilde{Q}_i \mid i \in [r], w \in W^i\}.\]
The outward primitive normal vector of the facet $w \widetilde{Q}_i$ is $w \varpi_i\spcheck$.
By Theorem \ref{thm-pres-coh} again, we have

\begin{corollary}
    The ring $H^*\left(X(\mathit{WP}) ; \mathbb{Q}\right)$ has the following presentation
    \begin{equation} \label{eq-pres-WP}
      H^*\left(X(\mathit{WP}) ; \mathbb{Q}\right)=\mathbb{Q}\left[X_{i,w} \mid i \in [r], w \in W^i\right] / (\widetilde{\mathcal{I}}+ \widetilde{\mathcal{J}}),
    \end{equation}
    where $\widetilde{\mathcal{I}}$ is the ideal generated by
    \[\left\{ \prod_{1 \le j \le k} X_{i_j, w_j} \Biggm| k \in \mathbb{N}, i_j \in [r], w_j \in W^{i_j}, \bigcap_{j} w_{j} \widetilde{Q}_{i_j} = \emptyset \right\},\]
    and $\widetilde{\mathcal{J}}$ is the ideal generated by
    \[\left\{  \sum_{i \in [r], w \in W^i} \left\langle q, w \varpi_i\spcheck \right\rangle X_{i,w} \Biggm| q \in \Lambda_r \right\}.\]
\end{corollary}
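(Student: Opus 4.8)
The plan is to apply Danilov's Theorem~\ref{thm-pres-coh} directly to the polytope $\mathit{WP}$, so that the work reduces to assembling its combinatorial data: that $\mathit{WP}$ is a rational simple polytope, the complete irredundant list of its facets, and their outward primitive normal vectors. First I would recall that $\mathit{WP} = \operatorname{Conv}(W\lambda)$ is a simple polytope whose normal fan is the Weyl chamber fan (the cones are the Weyl chambers and their faces, as noted in the introduction), and that it is rational with respect to $\Lambda_r$ since its vertices $w\lambda$ lie in the weight lattice while its facet normals lie in the coweight lattice $\Lambda_r^*$. Hence Theorem~\ref{thm-pres-coh} applies and yields a presentation of $H^*\left(X(\mathit{WP}) ; \mathbb{Q}\right)$ with one degree-two generator per facet.

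Next I would pin down the facets. The discussion preceding the corollary already shows that every facet of $\mathit{WP}$ is a $W$-translate of some $\widetilde{Q}_i$, and that the full list is $\{w \widetilde{Q}_i \mid i \in [r],\, w \in W^i\}$. The essential point is that this indexing is \emph{irredundant}: distinct pairs $(i,w)$ with $w \in W^i$ give distinct facets. This is exactly Lemma~\ref{lem-stab}: by part~\eqref{lem-stab-1} the stabilizer of $\widetilde{Q}_i$ is $W_i$, so the translates $w \widetilde{Q}_i$ are parametrized without repetition by the minimal coset representatives $W^i$ for $W/W_i$, and by part~\eqref{lem-stab-2} no translate of $\widetilde{Q}_i$ can coincide with a translate of $\widetilde{Q}_j$ for $i \ne j$. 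Thus the generators $X_{i,w}$ are in bijection with the facets.

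Finally I would record the normal data. Since $W$ acts on $E^*$ by lattice automorphisms of $\Lambda_r^*$, we have $\eta(w \widetilde{Q}_i) = w\, \eta(\widetilde{Q}_i) = w \varpi_i\spcheck$, and this vector is primitive because $\varpi_i\spcheck$ is a basis vector of $\Lambda_r^*$ and $w$ preserves primitivity. Feeding the facet list and these normal vectors into Theorem~\ref{thm-pres-coh}, the square-free monomial ideal becomes $\widetilde{\mathcal{I}}$ (generated by the products $\prod_j X_{i_j, w_j}$ over facet collections with empty intersection) and the linear ideal becomes $\widetilde{\mathcal{J}}$ (generated by $\sum_{i,w}\langle q, w \varpi_i\spcheck\rangle X_{i,w}$ for $q \in \Lambda_r$), giving the presentation~\eqref{eq-pres-WP}. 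The only genuine obstacle is the irredundancy of the facet indexing together with the primitivity of the $w\varpi_i\spcheck$; both are dispatched by Lemma~\ref{lem-stab} and the lattice action of $W$, and everything else is a direct transcription of Danilov's theorem.
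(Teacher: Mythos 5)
Your proposal is correct and follows essentially the same route as the paper: the paper likewise establishes the irredundant facet list $\{w\widetilde{Q}_i \mid i \in [r],\, w \in W^i\}$ via Lemma~\ref{lem-stab}, records the primitive normals $w\varpi_i\spcheck$, and then cites Theorem~\ref{thm-pres-coh} to read off the presentation. Your added remarks on rationality, simplicity of $\mathit{WP}$, and $W$ preserving primitivity in $\Lambda_r^*$ just make explicit what the paper leaves implicit.
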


In the above presentation of $H^*\left(X(\mathit{WP}) ; \mathbb{Q}\right)$, the variable $X_{i,w}$ corresponds to the facet $w \widetilde{Q}_i$.
\begin{notation} \label{nota}
    By abuse of notation, for an arbitrary element $w \in W$, we write $X_{i,w}$ to denote the variable $X_{i,\widetilde{w}}$ where $\widetilde{w} \in W^i \cap wW_i$ is the minimal element in $wW_i$.
\end{notation}

The $W$-action on $\mathit{WP}$ induces an action of $W$ on  $H^*\left(X(\mathit{WP}) ; \mathbb{Q}\right)$.
By definition, we have 
\[w' X_{i,w} = X_{i,w'w}.\]
Note that the ideals $\widetilde{\mathcal{I}}$ and $\widetilde{\mathcal{J}}$ are preserved by the $W$-action. 
For $\widetilde{\mathcal{I}}$, this is because \[\bigcap_{j} w'w_{j} \widetilde{Q}_{i_j} = w' \left(\bigcap_{j} w_{j} \widetilde{Q}_{i_j}\right) = \emptyset \] 
if $\bigcap_{j} w_{j} \widetilde{Q}_{i_j} = \emptyset$.
For $\widetilde{\mathcal{J}}$, this is because 
\[\sum_{i,w} \left\langle q, w \varpi_i\spcheck \right\rangle w'X_{i,w} = \sum_{i,w} \left\langle w'q, w'w \varpi_i\spcheck \right\rangle X_{i,w'w}\]
where $w'q \in \Lambda_r$ and $w'w$ can be replaced by the minimal element in $w'wW_i$.

Recall that $\lambda$ is strongly dominant and the vertices of $\mathit{WP}$ are $\{w \lambda \mid w \in W\}$. The following is a crucial lemma.

\begin{lemma} \label{lem-facet}
    \leavevmode
    \begin{enumerate}
      \item \label{lem-facet-1} If the vertex $\lambda$ lies in $w \widetilde{Q}_i$ for some $w \in W$ and some $i \in [r]$, then $\widetilde{Q}_i = w \widetilde{Q}_i$, that is, $w \in W_i$.
      \item \label{lem-facet-2} For any $w \in W$ and $i \in [r]$, if $\widetilde{Q}_i \ne w \widetilde{Q}_i$, then $\widetilde{Q}_i \cap w \widetilde{Q}_i = \emptyset$. 
    \end{enumerate}
\end{lemma}

\begin{proof}
    Suppose $\lambda \in w \widetilde{Q}_i$.
    Then, $w \widetilde{Q}_i = \widetilde{Q}_j$ for some $j \in [r]$, as $\widetilde{Q}_1, \dots, \widetilde{Q}_r$ are all the facets containing $\lambda$.
    By Lemma \ref{lem-stab}\eqref{lem-stab-2}, we have $i = j$ and $w \in W_i$.
    The item \eqref{lem-facet-1} is proved.

    Suppose now $\widetilde{Q}_i \cap w \widetilde{Q}_i \ne \emptyset$. 
    Then, there is a vertex of $\mathit{WP}$, say, $w'\lambda$, belonging to $\widetilde{Q}_i \cap w \widetilde{Q}_i$.
    Equivalently, $\lambda \in w'^{-1} \widetilde{Q}_i \cap w'^{-1} w \widetilde{Q}_i$.
    By \eqref{lem-facet-1}, this implies $\widetilde{Q}_i = w'^{-1} \widetilde{Q}_i = w'^{-1} w \widetilde{Q}_i$.
    By Lemma \ref{lem-stab}\eqref{lem-stab-1}, we have $w'^{-1}, w'^{-1} w \in W_i$.
    Therefore, $w \in W_i$ and hence $w \widetilde{Q}_i = \widetilde{Q}_i$.
    The item \eqref{lem-facet-2} is proved.
\end{proof}

\subsection{Proof of Theorem \ref{thm-main}}

Recall that $\eta(C_i) = -\beta_i$, $\eta(w\widetilde{Q}_i) = w \varpi_i\spcheck$, and $\{\eta(C_i) \mid i \in [r]\}$ is a basis for the space $E^*$.
For any $j \in [r]$ and $w \in W^j$, we have
\begin{equation} \label{eq-cijw}
    \eta(w \widetilde{Q}_j) - \eta(\widetilde{Q}_j) = \sum_{i \in [r]} c_{i,j,w} \eta(C_i) \text{ for some } c_{i,j,w} \in \mathbb{Q}.
\end{equation}

To prove Theorem \ref{thm-main}, we shall construct an isomorphism 
\[\varphi: H^*\left(X(P) ; \mathbb{Q}\right) \xrightarrow{\sim} H^*\left(X(\mathit{WP}) ; \mathbb{Q}\right)^W.\]
To this end, we use the presentations \eqref{eq-pres-P} and \eqref{eq-pres-WP} and define a ring homomorphism
\[\phi: \mathbb{Q}[x_1, \dots, x_r, y_1, \dots, y_r] \to H^*\left(X(\mathit{WP}) ; \mathbb{Q}\right)\]
by setting
\begin{equation} \label{eq-def-of-phi}
\begin{aligned}
    \phi(x_i) & := \sum_{w \in W^i} X_{i, w}, \\
    \phi(y_i) & := \sum_{j \in [r], w \in W^j} c_{i,j,w} X_{j,w}.
\end{aligned}
\end{equation}

\begin{proposition} \label{lem-im-inv}
    The image of $\phi$ is contained in $H^*\left(X(\mathit{WP}) ; \mathbb{Q}\right)^W$.
\end{proposition}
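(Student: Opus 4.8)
The plan is to show that each generator $\phi(x_i)$ and $\phi(y_i)$ is fixed by every $w' \in W$. Since $\phi$ is a ring homomorphism and $W$ acts by ring automorphisms, it suffices to verify $W$-invariance on these two families of elements. First I would treat $\phi(x_i) = \sum_{w \in W^i} X_{i,w}$. Applying $w'$ and using the action rule $w' X_{i,w} = X_{i, w'w}$, the sum becomes $\sum_{w \in W^i} X_{i, w'w}$, where by Notation \ref{nota} the index $w'w$ is interpreted as the minimal representative in $w'w W_i$. The key point is that left multiplication by $w'$ permutes the cosets $\{w W_i \mid w \in W^i\} = W/W_i$, so it permutes the set of minimal coset representatives $W^i$. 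Hence $\sum_{w \in W^i} X_{i, w'w}$ is just a reindexing of $\sum_{w \in W^i} X_{i,w}$, proving $w' \phi(x_i) = \phi(x_i)$.

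The invariance of $\phi(y_i)$ is the more delicate part and will be the main obstacle. Here I would exploit the defining relation \eqref{eq-cijw}, namely $\eta(w\widetilde{Q}_j) - \eta(\widetilde{Q}_j) = \sum_{i} c_{i,j,w}\, \eta(C_i)$, together with the fact that $\{\eta(C_i)\}$ is a basis of $E^*$, so the coefficients $c_{i,j,w}$ are uniquely determined. The natural strategy is to understand how the scalars $c_{i,j,w}$ transform when $w$ is replaced by $w'w$. Applying $w'$ to both sides of \eqref{eq-cijw} and using $\eta(w\widetilde{Q}_j) = w\varpi_j\spcheck$ gives $w'w\varpi_j\spcheck - w'\varpi_j\spcheck = \sum_i c_{i,j,w}\, w'\eta(C_i)$. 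Meanwhile the defining relation for the index $w'w$ (with $w'w$ replaced by its minimal representative, which does not change $w'w\varpi_j\spcheck$ since the stabilizer $W_j$ fixes $\varpi_j\spcheck$) reads $w'w\varpi_j\spcheck - \varpi_j\spcheck = \sum_i c_{i,j,w'w}\, \eta(C_i)$. Subtracting these two and rearranging expresses the difference of coefficient vectors in terms of $w'\varpi_j\spcheck - \varpi_j\spcheck$ and the action of $w'$ on the basis $\{\eta(C_i)\}$.

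To finish, I would compute $w'\phi(y_i) = \sum_{j,w} c_{i,j,w} X_{j, w'w}$ and reindex the inner sum over $w \in W^j$ as a sum over the minimal representatives of $w'w W_j$, exactly as in the $\phi(x_i)$ case. The upshot is that $w'\phi(y_i) - \phi(y_i) = \sum_{j,w}(c_{i,j,w'^{-1}w} - c_{i,j,w}) X_{j,w}$, and the transformation law for the coefficients derived above shows this difference, after summing over all facets $w\widetilde{Q}_j$, collapses to an element of the linear ideal $\widetilde{\mathcal{J}}$. Concretely, the telescoping correction terms assemble, via the definition of $\widetilde{\mathcal{J}}$ as the span of $\sum_{j,w}\langle q, w\varpi_j\spcheck\rangle X_{j,w}$ for $q \in \Lambda_r$, into exactly such linear forms with $q$ chosen according to the basis expansion of $\varpi_j\spcheck - w'^{-1}\varpi_j\spcheck$. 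Thus $w'\phi(y_i)$ and $\phi(y_i)$ represent the same class in $H^*(X(\mathit{WP});\mathbb{Q})$, which establishes the invariance. The technical heart, and where I would spend the most care, is verifying that these correction terms lie in $\widetilde{\mathcal{J}}$ rather than merely vanishing on the nose, since the coefficient vectors $c_{i,j,w}$ genuinely change under the $W$-action and only become consistent modulo the linear relations.
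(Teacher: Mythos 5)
Your proposal is correct in substance, but it takes a genuinely different route from the paper's. The paper never tracks how the coefficients $c_{i,j,w}$ transform under $W$: instead, for each $i$ it fixes $q_i \in \Lambda_r$ dual to $\eta(C_i)$ (e.g.\ $q_i = c\varpi_i$ with $c$ the determinant of the Cartan matrix, so that $\langle q_i, \eta(C_k)\rangle = 0$ for $k \ne i$), applies the single relation $\sum_{j,w}\langle q_i, \eta(w\widetilde{Q}_j)\rangle X_{j,w} = 0$ from $\widetilde{\mathcal{J}}$, and expands $\eta(w\widetilde{Q}_j)$ via \eqref{eq-cijw}; this exhibits $\langle q_i, \eta(C_i)\rangle\,\phi(y_i)$ as $-\sum_{j,w}\langle q_i,\eta(\widetilde{Q}_j)\rangle X_{j,w}$ in $H^*\left(X(\mathit{WP});\mathbb{Q}\right)$, a manifestly $W$-invariant element (it is a combination of the $\phi(x_j)$'s), and invariance of $\phi(y_i)$ follows at once. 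Your route---compute $w'\phi(y_i)-\phi(y_i)=\sum_{j,w}\left(c_{i,j,w'^{-1}w}-c_{i,j,w}\right)X_{j,w}$ and show it lies in $\widetilde{\mathcal{J}}$---also works, and is most cleanly finished as follows: let $\gamma_i\in E$ be the dual basis vector to $\{\eta(C_k)\}$, so that $c_{i,j,w}=\langle \gamma_i,\, w\varpi_j\spcheck-\varpi_j\spcheck\rangle$ by uniqueness of the coefficients in \eqref{eq-cijw}; then $W$-invariance of the pairing gives $c_{i,j,w'^{-1}w}-c_{i,j,w}=\langle w'\gamma_i-\gamma_i,\, w\varpi_j\spcheck\rangle$, so the difference is exactly the generator of $\widetilde{\mathcal{J}}$ attached to $q = w'\gamma_i-\gamma_i$, which is an integer multiple of $\varpi_i - w'\varpi_i$ and hence lies in $\Lambda_r$. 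One slip in your final step: the vector $q$ must be this single element built from the \emph{weight} $\varpi_i$, depending only on $i$ and $w'$; your phrase ``the basis expansion of $\varpi_j\spcheck - w'^{-1}\varpi_j\spcheck$'' points at the coweight space and carries an index $j$, which is not what is needed---harmless, since your derivation of the transformation law earlier is set up correctly, but worth fixing. The trade-off between the two arguments: the paper's is shorter, avoids all coefficient bookkeeping, and yields the extra fact that $\phi(y_i)$ coincides in cohomology with a linear combination of the $\phi(x_j)$'s; yours verifies invariance directly from the definition of the $W$-action and makes explicit that the failure of invariance at the level of the polynomial ring is absorbed precisely by the linear ideal $\widetilde{\mathcal{J}}$, which is the correct technical heart exactly as you identified.
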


\begin{proof}
    It is clear that $\phi(x_i)$ belongs to $H^*\left(X(\mathit{WP}) ; \mathbb{Q}\right)^W$.

    Next we show that $\phi(y_i) \in H^*\left(X(\mathit{WP}) ; \mathbb{Q}\right)^W$.
    For each $i \in [r]$, fix a $q_i \in \Lambda_r$ such that $\langle q_i, \eta(C_j)\rangle = 0$ for all $j \ne i$ and $\langle q_i, \eta(C_i) \rangle \ne 0$. 
    For example, $q_i$ can be chosen to be $c \varpi_i$, where $c$ is the determinant of the Cartan matrix.
    In $H^*\left(X(\mathit{WP}) ; \mathbb{Q}\right)$ we have
    \begin{align}
        0 & = \sum_{j \in [r], w \in W^j} \left\langle q_i, \eta(w \widetilde{Q}_j) \right\rangle X_{j,w} \notag \\
        & = \sum_{j \in [r], w \in W^j} \left\langle q_i, \eta(\widetilde{Q}_j) + \sum_{k \in [r]} c_{k,j,w} \eta(C_k) \right\rangle X_{j,w} \notag \\
        & = \sum_{j \in [r], w \in W^j} \left\langle q_i, \eta(\widetilde{Q}_j) +  c_{i,j,w} \eta(C_i) \right\rangle X_{j,w} \notag \\
        & = \sum_{j \in [r], w \in W^j} \left\langle q_i, \eta(\widetilde{Q}_j) \right\rangle X_{j,w} + \left\langle q_i,  \eta(C_i) \right\rangle\sum_{j \in [r], w \in W^j} c_{i,j,w} X_{j,w}. \label{eq-lem-im-inv-1}
    \end{align}
    Note that the first term in \eqref{eq-lem-im-inv-1} is $W$-invariant. 
    Therefore, the second term, which is a nonzero multiple of $\phi(y_i)$, is also $W$-invariant. 
\end{proof}

\begin{proposition} \label{lem-ker-IJ}
    The kernel of $\phi$ contains the ideals $\mathcal{I}$ and $\mathcal{J}$.
\end{proposition}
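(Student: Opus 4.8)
The plan is to verify the two containments $\mathcal{J}\subseteq\ker\phi$ and $\mathcal{I}\subseteq\ker\phi$ separately. The first is a direct bookkeeping computation, while the second rests on the intersection combinatorics recorded in Lemma \ref{lem-facet}.

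For $\mathcal{J}$, I would show that $\phi$ sends each generator of $\mathcal{J}$ to a generator of $\widetilde{\mathcal{J}}$, which vanishes in $H^*(X(\mathit{WP}) ; \mathbb{Q})$. Fix $q\in\Lambda_r$ and apply $\phi$ to the corresponding generator $\sum_i\langle q,\varpi_i\spcheck\rangle x_i+\sum_i\langle q,-\beta_i\rangle y_i$ using \eqref{eq-def-of-phi}. The image of the $x$-part is $\sum_{j,\,w\in W^j}\langle q,\eta(\widetilde{Q}_j)\rangle X_{j,w}$ since $\eta(\widetilde{Q}_j)=\varpi_j\spcheck$. For the $y$-part, interchanging the order of summation and using $\langle q,-\beta_i\rangle=\langle q,\eta(C_i)\rangle$ together with the defining relation \eqref{eq-cijw}, namely $\sum_i c_{i,j,w}\eta(C_i)=\eta(w\widetilde{Q}_j)-\eta(\widetilde{Q}_j)$, turns the $y$-part into $\sum_{j,w}\langle q,\eta(w\widetilde{Q}_j)-\eta(\widetilde{Q}_j)\rangle X_{j,w}$. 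Adding the two parts cancels the $\eta(\widetilde{Q}_j)$ contributions and leaves $\sum_{j,\,w\in W^j}\langle q,w\varpi_j\spcheck\rangle X_{j,w}$, the generator of $\widetilde{\mathcal{J}}$ indexed by $q$, hence $0$. This step is routine.

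For $\mathcal{I}$, the goal is $\phi(x_i)\phi(y_i)=0$ for each $i$, and the first move is to reduce to a single facet. Since $X_{i,w}=w\,X_{i,e}$ for $w\in W^i$ and $\phi(y_i)$ is $W$-invariant by Proposition \ref{lem-im-inv}, one obtains $\phi(x_i)\phi(y_i)=\sum_{w\in W^i}w\bigl(X_{i,e}\,\phi(y_i)\bigr)$, so it suffices to prove $X_{i,e}\,\phi(y_i)=0$. Expanding gives $X_{i,e}\,\phi(y_i)=\sum_{j,\,w\in W^j}c_{i,j,w}\,X_{i,e}X_{j,w}$, and by the Stanley--Reisner relations a summand survives only when $\widetilde{Q}_i\cap w\widetilde{Q}_j\neq\emptyset$. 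The key claim is that $c_{i,j,w}=0$ for every such surviving pair. Indeed, a nonempty intersection of faces contains a vertex $v\lambda$ of $\mathit{WP}$; from $\lambda\in v^{-1}\widetilde{Q}_i$ and $\lambda\in v^{-1}w\widetilde{Q}_j$, Lemma \ref{lem-facet}\eqref{lem-facet-1} yields $v\in W_i$ and $v^{-1}w\in W_j$, so $w\widetilde{Q}_j=v\widetilde{Q}_j$ and $\eta(w\widetilde{Q}_j)=v\varpi_j\spcheck$. Since each generator $s_k$ ($k\neq i$) of $W_i$ moves a covector by a multiple of $\alpha_k\spcheck$, the difference $\eta(w\widetilde{Q}_j)-\eta(\widetilde{Q}_j)=v\varpi_j\spcheck-\varpi_j\spcheck$ lies in the span of $\{\alpha_k\spcheck\mid k\neq i\}$; comparing with \eqref{eq-cijw} and using that $\beta_i$ is a positive multiple of $\alpha_i\spcheck$, the coefficient $c_{i,j,w}$ of $\eta(C_i)=-\beta_i$ must vanish. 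Thus every surviving summand is annihilated, giving $X_{i,e}\,\phi(y_i)=0$ and hence $\phi(x_i)\phi(y_i)=0$.

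The main obstacle is the $\mathcal{I}$ part, and the crux is recognizing that a naive term-by-term cancellation in $\phi(x_i)\phi(y_i)$ should be avoided: the clean argument depends on first invoking $W$-invariance to reduce to the single facet $\widetilde{Q}_i$, after which any surviving intersection is forced to share a vertex in $W_i\lambda$ with $v\in W_i$, which is precisely what kills the $\beta_i$-coefficient via the span computation. Without this reduction, tracking which translates $w\widetilde{Q}_j$ meet a general $w'\widetilde{Q}_i$ becomes substantially more intricate.
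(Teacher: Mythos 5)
Your proof is correct and follows essentially the same route as the paper's: the same reduction of $\phi(x_i y_i)$ to $X_{i,e}\,\phi(y_i)$ via the $W$-invariance of $\phi(y_i)$, the same key claim that $c_{i,j,w}=0$ whenever $\widetilde{Q}_i \cap w\widetilde{Q}_j \ne \emptyset$ (proved by locating a vertex $v\lambda$ in the intersection and comparing with \eqref{eq-cijw} via the span of $\{\alpha_k\spcheck \mid k\ne i\}$), and the same telescoping computation sending each generator of $\mathcal{J}$ into $\widetilde{\mathcal{J}}$. The only cosmetic difference is that you invoke Lemma \ref{lem-facet}\eqref{lem-facet-1} twice (for $v$ and for $v^{-1}w$) where the paper uses \eqref{lem-facet-1} once and \eqref{lem-facet-2} once; since \eqref{lem-facet-2} is itself deduced from \eqref{lem-facet-1}, the two arguments coincide.
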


\begin{proof}
    First, we show that $\mathcal{I} \subseteq \ker \phi$, that is, 
    \[\phi(x_i y_i) = 0\]
    for each $i \in [r]$.
    By Proposition \ref{lem-im-inv}, $\phi(y_i)$ is $W$-invariant.
    Thus we have 
    \begin{align*}
        \phi(x_i y_i) & = \left( \sum_{w \in W^i} X_{i,w}  \right) \cdot \phi(y_i) \\
        & = \sum_{w \in W^i} X_{i,w} \cdot \left( w \phi(y_i) \right) \\
        & = \sum_{w \in W^i} w \left( X_{i,e} \phi(y_i) \right) \\
        & = \sum_{w \in W^i} w \left( \sum_{j \in [r], w \in W^j} c_{i,j,w} X_{i,e} X_{j,w} \right).
    \end{align*}
    We claim that
    \begin{equation*} 
        \text{if } X_{i,e} X_{j,w} \ne 0, \text{ or equivalently, if } \widetilde{Q}_i \cap w \widetilde{Q}_j \ne \emptyset, \text{ then } c_{i,j,w} = 0
    \end{equation*}
    (and thus $\phi(x_i y_i) = 0$).
    Suppose $\widetilde{Q}_i \cap w \widetilde{Q}_j \ne \emptyset$.
    Then, there is a vertex of $\mathit{WP}$, say, $w' \lambda$ ($w' \in W$),  belonging to  $\widetilde{Q}_i \cap w \widetilde{Q}_j$.
    Note that $\lambda$ is a vertex of $\widetilde{Q}_i$ and $w'^{-1} \widetilde{Q}_i$ simultaneously. 
    Thus, $w' \in W_i$ by Lemma \ref{lem-facet}\eqref{lem-facet-1}.
    Moreover, we have $w' \lambda \in w \widetilde{Q}_j \cap w' \widetilde{Q}_j$.
    Therefore, $w \widetilde{Q}_j = w' \widetilde{Q}_j$ by Lemma \ref{lem-facet}\eqref{lem-facet-2}.
    Consequently, we have
    \begin{equation} \label{eq-lem-ker-IJ-2}
        \begin{split}
            \eta(w \widetilde{Q}_j) - \eta(\widetilde{Q}_j) & = \eta(w' \widetilde{Q}_j) - \eta(\widetilde{Q}_j) \\
            & = w' \eta(\widetilde{Q}_j) - \eta(\widetilde{Q}_j) \\
            & \in \sum_{k \ne i} \mathbb{R} \alpha_k\spcheck = \sum_{k \ne i} \mathbb{R} \eta(C_k).
        \end{split}
    \end{equation}
    Comparing Equations \eqref{eq-lem-ker-IJ-2} and  \eqref{eq-cijw}, we see that $c_{i,j,w} = 0$ as claimed.

    It remains to show $\mathcal{J} \subseteq \ker \phi$.
    For this, we need to show that for any $q \in \Lambda_r$, 
    \begin{equation} \label{eq-lem-ker-IJ-1}
        \phi \left( \sum_{i\in [r]}\left\langle q, \eta(Q_i) \right\rangle x_i+\sum_{i\in [r]}\left\langle q, \eta(C_i) \right\rangle y_i \right) = 0.
    \end{equation}
    By definition, we have
    \begin{align*}
        \text{Left-hand side of } \eqref{eq-lem-ker-IJ-1} & = \sum_{i\in [r]} \left\langle q, \eta(Q_i) \right\rangle \sum_{w \in W^i} X_{i,w} + \sum_{i\in [r]} \left\langle q, \eta(C_i) \right\rangle \sum_{j \in [r], w \in W^j} c_{i,j,w} X_{j,w}\\
        & = \sum_{j \in [r], w \in W^j}  \left\langle q, \eta(Q_j) + \sum_{i\in [r]} c_{i,j,w} \eta(C_i) \right\rangle  X_{j,w} \\
        & = \sum_{j \in [r], w \in W^j} \left\langle q, \eta(w \widetilde{Q}_j) \right\rangle  X_{j,w} \in \widetilde{\mathcal{J}}.
    \end{align*}
    Thus, Eq.~\eqref{eq-lem-ker-IJ-1} holds.
\end{proof}

\begin{proposition} \label{lem-im=inv}
    The image of $\phi$ is exactly $H^*\left(X(\mathit{WP}) ; \mathbb{Q}\right)^W$.
\end{proposition}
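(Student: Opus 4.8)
The plan is to prove surjectivity of $\phi$ onto the invariant subring by comparing dimensions in each graded piece, rather than by exhibiting explicit preimages. We already know from Proposition~\ref{lem-im-inv} and Proposition~\ref{lem-ker-IJ} that $\phi$ factors through a well-defined graded ring homomorphism $\varphi \colon H^*(X(P);\mathbb{Q}) \to H^*(X(\mathit{WP});\mathbb{Q})^W$, so it suffices to show that $\varphi$ is surjective; combined with the eventual injectivity argument this will give the desired isomorphism. First I would establish that it is enough to match Poincar\'e series: since $\varphi$ is a degree-preserving map of finite-dimensional graded vector spaces, surjectivity in each degree follows once we show $\dim_{\mathbb{Q}} H^{2k}(X(P);\mathbb{Q}) \ge \dim_{\mathbb{Q}} \bigl(H^{2k}(X(\mathit{WP});\mathbb{Q})^W\bigr)$ for all $k$, because $\varphi$ will separately be shown injective (forcing equality and hence surjectivity). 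This reduces everything to a computation of the two Poincar\'e polynomials.

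The key inputs are the results of Procesi and Lehrer cited in the introduction, which describe $H^*(X(\mathit{WP});\mathbb{Q})$ as a graded $W$-representation. The plan is to invoke these to compute the Poincar\'e polynomial of the invariant part $H^*(X(\mathit{WP});\mathbb{Q})^W$, i.e.\ the graded multiplicity of the trivial representation. On the other side, the explicit presentation \eqref{eq-pres-P} of $H^*(X(P);\mathbb{Q})$ via the Stanley--Reisner ring of an $r$-cube makes its Poincar\'e polynomial directly computable: because $P$ is combinatorially a cube, the $h$-vector (equivalently the Betti numbers of the orbifold $X(P)$) can be read off from the face structure, and the toric/Danilov presentation guarantees that the dimension of $H^{2k}(X(P);\mathbb{Q})$ equals the corresponding $h$-number. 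I would then verify that these two Poincar\'e polynomials agree; this is where the classical Weyl-group combinatorics (the trivial-isotypic component of the toric variety cohomology being governed by the descent/length statistics on $W$, matched against the cube's $h$-vector, which is $(1+t)^r$ in the unrefined count but refined appropriately by the structure of $P$) must be reconciled.

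The main obstacle will be the Poincar\'e polynomial matching, and specifically extracting the trivial-isotypic graded multiplicity from the Procesi--Lehrer description in a type-uniform way. The representation $H^*(X(\mathit{WP});\mathbb{Q})$ is, as a graded $W$-module, built from induced representations indexed by the face structure of the Coxeter complex, and isolating the invariants requires either Frobenius reciprocity applied to each induced piece or a direct character/Molien-type computation. The delicate point is to show the resulting generating function equals the $h$-polynomial of $P$ without resorting to a case-by-case check; I expect this to hinge on a clean combinatorial identity relating $W$-invariants of the permutohedral cohomology to the dominant-chamber face counts. A cleaner alternative, which I would pursue in parallel, is to avoid dimension counting altogether on the surjectivity side: one shows directly that the classes $\phi(x_i)$ and $\phi(y_i)$, together with their products supported on cones of the dominant fan, already span $H^*(X(\mathit{WP});\mathbb{Q})^W$, by arguing that any $W$-invariant Stanley--Reisner monomial can be rewritten, modulo $\widetilde{\mathcal{I}}+\widetilde{\mathcal{J}}$, as an image under $\phi$ of a monomial supported on $\Sigma_P$. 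This structural approach sidesteps the representation-theoretic computation and is likely the route the authors take; the obstacle there shifts to controlling how $W$-orbit sums of monomials reduce in the quotient ring, which the facet-intersection lemmas (Lemma~\ref{lem-facet}) are precisely designed to handle.
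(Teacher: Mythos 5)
Your primary plan (Poincar\'e-series comparison) is circular as stated. You propose to get surjectivity from the inequality $\dim H^{2k}(X(P);\mathbb{Q}) \ge \dim\bigl(H^{2k}(X(\mathit{WP});\mathbb{Q})^W\bigr)$ ``because $\varphi$ will separately be shown injective,'' but in the paper the logical order is the reverse: injectivity is what gets \emph{deduced} from this proposition, via Lemma~\ref{lem-dim} (both sides have total dimension $2^r$, so the surjection $\varphi$ is an isomorphism). A degree-preserving map between graded spaces of equal dimensions need be neither injective nor surjective, so you must prove one of the two by hand, and you never supply an independent injectivity argument. One could be manufactured from Poincar\'e duality of $H^*(X(P);\mathbb{Q})$ (a graded map out of a Poincar\'e duality algebra is injective as soon as it is nonzero on the socle, i.e.\ in top degree), but that requires verifying $\phi(x_1\cdots x_r)\ne 0$ in $H^{2r}(X(\mathit{WP});\mathbb{Q})$, a nontrivial extra step you do not address. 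Note also that Lehrer's result as cited in the paper gives only the \emph{total} dimension $2^r$ of the invariants; the graded refinement you need (Poincar\'e polynomial $(1+q)^r$) is due to Stembridge and is only invoked in the paper's noncrystallographic section.

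Your ``cleaner alternative'' is indeed the paper's route, and you correctly identify Lemma~\ref{lem-facet} as the engine, but the sketch is missing the device that makes it work: the lift to the $T$-equivariant cohomology $H^*_T\left(X(\mathit{WP});\mathbb{Q}\right)=\mathbb{Q}\left[X_{i,w}\right]/\widetilde{\mathcal{I}}$, i.e.\ the Stanley--Reisner ring, where monomials supported on cones form a linear basis. There the paper proves (Lemma~\ref{lem-T-coh-gen}) that the $W$-invariant subalgebra is generated by the orbit sums $\sum_{w\in W^i}X_{i,w}$ alone --- the images $\phi(y_i)$ are not needed as generators, contrary to your sketch --- using the identity $\sum_{w\in W^I}X_{i_1,w}^{n_1}\cdots X_{i_k,w}^{n_k}=\prod_{j}\bigl(\sum_{w\in W^{i_j}}X_{i_j,w}\bigr)^{n_j}$, which holds already modulo $\widetilde{\mathcal{I}}$ by the disjointness statements of Lemmas~\ref{lem-stab} and \ref{lem-facet}. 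The descent to ordinary cohomology is then automatic: the projection $H^*_T\to H^*$ kills the $W$-stable ideal $\widetilde{\mathcal{J}}$, hence is $W$-equivariant, and by averaging in characteristic $0$ it restricts to a surjection on $W$-invariants. Your plan to work directly modulo $\widetilde{\mathcal{I}}+\widetilde{\mathcal{J}}$ and ``rewrite any $W$-invariant Stanley--Reisner monomial'' is where the argument would stall: individual monomials are not $W$-invariant (only orbit sums are), and in the quotient by the linear ideal $\widetilde{\mathcal{J}}$ monomials no longer form a basis, so ``the monomials appearing in an invariant class'' has no canonical meaning. The equivariant lift is precisely what turns your heuristic into a proof.
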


\begin{proof}
    Recall that the $T$-equivariant cohomology ring $H^*_T\left(X(\mathit{WP}) ; \mathbb{Q}\right)$ has a presentation 
    \begin{equation*} 
      H^*_T\left(X(\mathit{WP}) ; \mathbb{Q}\right)=\mathbb{Q}\left[X_{i,w} \mid i \in [r], w \in W^i\right] / \widetilde{\mathcal{I}}
    \end{equation*}
    (see Theorem \ref{thm-pres-T-coh}).
    We have a natural projection 
    \[p : H^*_T\left(X(\mathit{WP}) ; \mathbb{Q}\right) \twoheadrightarrow H^*\left(X(\mathit{WP}) ; \mathbb{Q}\right).\]
    This projection is $W$-equivariant (that is, a homomorphism of $W$-modules), as the ideal $\widetilde{\mathcal{J}}$ is preserved by the $W$-action.
    Therefore, the restriction to the $W$-fixed part $H^*_T\left(X(\mathit{WP}) ; \mathbb{Q}\right)^W \to H^*\left(X(\mathit{WP}) ; \mathbb{Q}\right)^W$ is also surjective by a standard averaging argument.
    Then, this proposition follows from the following Lemma \ref{lem-T-coh-gen}, which implies that $H^*\left(X(\mathit{WP}) ; \mathbb{Q}\right)^W$ is generated by all the $\phi(x_i)$'s.
\end{proof}

We have the following lemma, whose proof leverages the Weyl group action on the face structure of $\mathit{WP}$.
\begin{lemma} \label{lem-T-coh-gen}
    The algebra $H^*_T(X(\mathit{WP}) ; \mathbb{Q})^W$ is generated by 
    \[\sum_{w \in W^i} X_{i,w}, \quad i \in [r].\]
\end{lemma}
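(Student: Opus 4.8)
The plan is to exploit the explicit monomial basis of the Stanley--Reisner ring $H^*_T(X(\mathit{WP}) ; \mathbb{Q}) = \mathbb{Q}[X_{i,w}]/\widetilde{\mathcal{I}}$ from Theorem~\ref{thm-pres-T-coh} together with the permutation action of $W$ on it. Writing $X_i := \sum_{w \in W^i} X_{i,w}$ for the claimed generators, I first note that $H^*_T$ is a permutation representation on its canonical monomial basis (the monomials supported on a cone of $\Sigma_{\mathit{WP}}$), so over $\mathbb{Q}$ its invariant subalgebra is spanned as a vector space by the $W$-orbit sums of these basis monomials. It therefore suffices to show that every orbit sum is a monomial in the $X_i$, after which generation follows immediately.

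The next step isolates two structural facts about nonzero monomials. First, after translating, Lemma~\ref{lem-facet}\eqref{lem-facet-2} shows that any two facets of the \emph{same} type are either equal or disjoint, so a product of two distinct same-type variables lies in $\widetilde{\mathcal{I}}$; consequently every nonzero monomial involves at most one variable of each type and hence has a well-defined \emph{type set} $T \subseteq [r]$ and exponent vector $a = (a_i)_{i \in T}$ with $a_i \ge 1$, reading $\prod_{i \in T} X_{i,w_i}^{a_i}$ with $\{w_i\widetilde{Q}_i\}_{i\in T}$ meeting in a common face. Second, I claim that for a fixed shape $(T,a)$ the group $W$ permutes the nonzero monomials of that shape transitively. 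Since the $W$-action $w\cdot\prod_i X_{i,w_i}^{a_i} = \prod_i X_{i,ww_i}^{a_i}$ corresponds to the $W$-action on the associated faces $\bigcap_{i\in T} w_i\widetilde{Q}_i$, this amounts to transitivity of $W$ on faces of type $T$.

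To prove that transitivity -- the crux of the argument, and the step I expect to require the most care -- I would reduce to a vertex. Given a face $\bigcap_{i\in T} w_i\widetilde{Q}_i$, pick a vertex $v\lambda$ of it with $v \in W$. By Lemma~\ref{lem-facet}\eqref{lem-facet-1} the only facets through the vertex $\lambda$ are $\widetilde{Q}_1,\dots,\widetilde{Q}_r$, one of each type; translating by $v$, the facets through $v\lambda$ are exactly $v\widetilde{Q}_1,\dots,v\widetilde{Q}_r$, again one per type. As each $w_i\widetilde{Q}_i$ is a facet of type $i$ through $v\lambda$, it must equal $v\widetilde{Q}_i$, so the given face is $v\bigl(\bigcap_{i\in T}\widetilde{Q}_i\bigr)$. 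Hence every type-$T$ face is a $W$-translate of the standard one $\bigcap_{i\in T}\widetilde{Q}_i$, proving transitivity.

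Finally I would carry out the product computation. Because distinct same-type variables multiply to zero, all cross terms collapse and $X_i^{a_i} = \sum_{w\in W^i} X_{i,w}^{a_i}$; expanding the product over $i \in T$ then gives
\[
  \prod_{i\in T} X_i^{a_i} = \sum_{(w_i)_{i\in T}} \prod_{i\in T} X_{i,w_i}^{a_i} = \sum_{m} m,
\]
where the last sum runs over all nonzero monomials $m$ of shape $(T,a)$, each with coefficient $1$. By the transitivity just established, this set is a single $W$-orbit, so $\prod_{i\in T} X_i^{a_i}$ is exactly the orbit sum of any shape-$(T,a)$ monomial. Since every orbit sum arises in this way, the $X_i$ generate $H^*_T(X(\mathit{WP}) ; \mathbb{Q})^W$ as a $\mathbb{Q}$-algebra, completing the proof. (In fact the orbit sums are linearly independent and exhaust the invariants, so $x_i \mapsto X_i$ identifies the invariant subalgebra with a polynomial ring, but only surjectivity is needed here.)
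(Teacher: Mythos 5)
Your proof is correct and takes essentially the same approach as the paper: the crux in both is that the $W$-orbit sum of a nonzero monomial equals the corresponding product $\prod_i \bigl(\sum_{w \in W^i} X_{i,w}\bigr)^{n_i}$, established by locating a vertex $v\lambda$ of the common face, translating it to $\lambda$, and applying Lemma~\ref{lem-facet}\eqref{lem-facet-1} together with the disjointness of distinct same-type facets. The only (cosmetic) difference is at the top level, where you invoke the fact that invariants of a permutation representation are spanned by orbit sums of basis monomials, while the paper reaches the same reduction by an induction that subtracts off one orbit sum at a time.
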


\begin{proof}
    The elements in $H^*_T(X(\mathit{WP}) ; \mathbb{Q})$ are polynomials in the variables $X_{i,w}$.
    Suppose $f \in H^*_T(X(\mathit{WP}) ; \mathbb{Q})^W$ and $f \ne 0$.
    Let $X_{i_1, w_1}^{n_1} \cdots X_{i_k, w_k}^{n_k}$ be a nonzero monomial appearing in $f$, that is,
    \[f = c X_{i_1, w_1}^{n_1} \cdots X_{i_k, w_k}^{n_k} + \text{ other terms, with $c \ne 0$}.\]
    Then by definition, $w_1 \widetilde{Q}_{i_1} \cap \dots \cap w_k \widetilde{Q}_{i_k} \ne \emptyset$.
    There is a vertex of $\mathit{WP}$, say, $w_0\lambda$, lying in this intersection.
    Then, 
    \[\lambda \in w_0^{-1} w_1 \widetilde{Q}_{i_1} \cap \dots \cap w_0^{-1} w_k \widetilde{Q}_{i_k}.\] 
    By Lemma \ref{lem-facet}\eqref{lem-facet-1}, this implies $w_0^{-1} w_j \widetilde{Q}_{i_j} = \widetilde{Q}_{i_j}$ for each $j$. 
    In particular,
    \[w_0^{-1} X_{i_1, w_1}^{n_1} \cdots X_{i_k, w_k}^{n_k} = X_{i_1, e}^{n_1} \cdots X_{i_k, e}^{n_k}.\]
    As $f$ is $W$-invariant, the monomial $X_{i_1, e}^{n_1} \cdots X_{i_k, e}^{n_k}$ also appears in $f$ and we can assume $i_1, \cdots, i_k$ are mutually distinct.

    Let $I := [r] \setminus \{i_1, \dots, i_k\}$, $W_I := \langle s_i \mid i\in I \rangle$ (the parabolic subgroup of $W$ corresponding to $I$), and $W^I$ be the set of minimal representatives for left cosets $W / W_I$.
    Then,
    \[\operatorname{Stab}(X_{i_1, e}^{n_1} \cdots X_{i_k, e}^{n_k}) = \bigcap_{1 \le j \le k} \operatorname{Stab}(X_{i_j,e}) = \bigcap_{1 \le j \le k} W_{i_j} = W_I.\]
    The first equality is due to the fact that $\widetilde{Q}_i$ and $\widetilde{Q}_j$ belong to different $W$-orbit whenever $i \ne j$ (see Lemma \ref{lem-stab}\eqref{lem-stab-2}).
    
    Because $f$ is $W$-invariant, all the $W$-translations of $X_{i_1, e}^{n_1} \cdots X_{i_k, e}^{n_k}$ must appear in $f$ with the same coefficient.
    Therefore, 
    \[f = c \sum_{w \in W^I} X_{i_1, w}^{n_1} \cdots X_{i_k, w}^{n_k} + \text{ other terms, with $c \ne 0$}.\]
    By induction, it suffices to prove the following equality in $H^*_T(X(\mathit{WP}) ; \mathbb{Q})$ :
    \begin{equation} \label{eq-claim-2}
        \sum_{w \in W^I} X_{i_1, w}^{n_1} \cdots X_{i_k, w}^{n_k} = \left( \sum_{w \in W^{i_1}} X_{i_1,w} \right)^{n_1} \cdots \left( \sum_{w \in W^{i_k}} X_{i_k,w} \right)^{n_k}.
    \end{equation}
    By Lemma \ref{lem-stab}\eqref{lem-stab-2}, $w \widetilde{Q}_i$ and $w' \widetilde{Q}_i$ have empty intersection whenever $w \ne w'$.
    Thus, we have $(\sum_{w \in W^i} X_{i,w})^n = \sum_{w \in W^i} X_{i,w}^n$, and Eq.~\eqref{eq-claim-2} is equivalent to 
    \begin{equation} \label{eq-lem-T-coh-gen}
        \sum_{w \in W^I} X_{i_1, w}^{n_1} \cdots X_{i_k, w}^{n_k} =  \sum_{w_1 \in W^{i_1}, \dots, w_k \in W^{i_k}} X_{i_1,w_1}^{n_1}  \cdots  X_{i_k,w_k}^{n_k}.
    \end{equation}
    Note that the summands in the left-hand side of Eq.~\eqref{eq-lem-T-coh-gen} are different to each other, that is, appear only once.
    On the other hand, each summand in the right-hand side of Eq.~\eqref{eq-lem-T-coh-gen}, if nonzero, appears only once as well.
    Therefore, it suffices to show that all the summands in the left-hand side appear in the right-hand side, and vice versa.

    For any $w \in W^I$ and $j \in [k]$, let $w_j$ denote the minimal representative in the coset $wW_{i_j}$.
    Then, the summand $X_{i_1, w}^{n_1} \cdots X_{i_k, w}^{n_k}$ in the left-hand side of Eq.~\eqref{eq-lem-T-coh-gen} equals $X_{i_1, w_1}^{n_1} \cdots X_{i_k, w_k}^{n_k}$, which appears in the right-hand side of Eq.~\eqref{eq-lem-T-coh-gen}.
    Conversely, suppose $X_{i_1, w_1}^{n_1} \cdots X_{i_k, w_k}^{n_k}$ ($w_j \in W^{i_j}$) is a nonzero monomial.
    Then, as above, there is a vertex of $\mathit{WP}$, say, $w' \lambda$, belonging to $w_1 \widetilde{Q}_{i_1} \cap \cdots \cap w_k \widetilde{Q}_{i_k}$.
    Equivalently, we have
    \[\lambda \in (w')^{-1} w_1 \widetilde{Q}_{i_1} \cap \cdots \cap (w')^{-1} w_k \widetilde{Q}_{i_k}.\]
    By Lemma \ref{lem-facet}\eqref{lem-facet-1} again, we have $w_j \in w'W_{i_j}$ for each $j$, and hence $X_{i_j,w_j} = X_{i_j, w'}$.
    Let $w \in W^I$ be the minimal representative in  the coset $w' W_I$.
    Then, 
    \[X_{i_1, w_1}^{n_1} \cdots X_{i_k, w_k}^{n_k} = X_{i_1, w'}^{n_1} \cdots X_{i_k, w'}^{n_k}= X_{i_1, w}^{n_1} \cdots X_{i_k, w}^{n_k},\]
    which is a summand in the left-hand side of Eq.~\eqref{eq-lem-T-coh-gen}.
    The proof is complete.
\end{proof}

By Propositions \ref{lem-ker-IJ} and \ref{lem-im=inv}, the homomorphism $\phi$ induces a surjective homomorphism
\[\varphi: H^*\left(X(P) ; \mathbb{Q}\right) \twoheadrightarrow H^*\left(X(\mathit{WP}) ; \mathbb{Q}\right)^W.\]
The following lemma indicates that $\varphi$ is in fact an isomorphism.

\begin{lemma} \label{lem-dim}
    $\dim H^*\left(X(P) ; \mathbb{Q}\right) = 2^r = \dim H^*\left(X(\mathit{WP}) ; \mathbb{Q}\right)^W$.
\end{lemma}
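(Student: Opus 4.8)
The plan is to compute both dimensions independently and check that they coincide. For the left-hand side, I would use the presentation \eqref{eq-pres-P} of $H^*(X(P);\mathbb{Q})$ obtained in Theorem \ref{thm-pres-P}. The key structural fact established there is that $P$ is combinatorially equivalent to the $r$-dimensional cube $[0,1]^r$, with facets $Q_1,\dots,Q_r,C_1,\dots,C_r$ and the only empty intersections among facets being the ``opposite'' pairs $\{Q_i,C_i\}$. This means the ideal $\mathcal{I}$ is generated by $\{x_iy_i \mid i\in[r]\}$, so the Stanley--Reisner ring $\mathbb{Q}[x_1,\dots,x_r,y_1,\dots,y_r]/\mathcal{I}$ has a $\mathbb{Q}$-basis of squarefree-in-each-opposite-pair monomials supported on the cones of the fan. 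Since the fan is that of a cube, its face lattice gives exactly $2^r$ maximal cones (equivalently $2^r$ vertices of $P$), and quotienting further by the linear ideal $\mathcal{J}$ (of rank $r$) cuts the dimension down appropriately. The cleanest way to see $\dim H^*(X(P);\mathbb{Q}) = 2^r$ is via the fact that $X(P)$ is a rationally smooth projective toric orbifold of complex dimension $r$ whose total Betti number equals the number of maximal cones in its fan, which is the number of vertices of the cube, namely $2^r$.

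For the right-hand side, I would compute $\dim H^*(X(\mathit{WP});\mathbb{Q})^W$ using the known structure of the $W$-representation on $H^*(X(\mathit{WP});\mathbb{Q})$. By the cited results of Procesi \cite{Procesi90} and Lehrer \cite{lehrer2008rational}, the graded character of this $W$-action is well understood; in particular, since $X(\mathit{WP})$ is a smooth projective toric variety of complex dimension $r$ whose fan is the Coxeter (Weyl chamber) fan, its total Betti number is $|W|$ times a combinatorial factor, and the multiplicity of the trivial representation can be extracted. The most direct route is to recall that $H^*(X(\mathit{WP});\mathbb{Q})$ as a graded $W$-module is isomorphic to the regular-representation-weighted sum governed by the Eulerian-type statistics appearing in Procesi's and Lehrer's descriptions, and then to compute the dimension of the trivial isotypic component by averaging the graded character over $W$. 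The upshot I expect is that $\dim H^*(X(\mathit{WP});\mathbb{Q})^W$ equals the number of $W$-orbits of maximal cones of the Coxeter fan, which is the number of chambers up to the $W$-action, and this should again evaluate to $2^r$.

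Alternatively, and perhaps more in keeping with the algebraic spirit of the paper, I would avoid invoking the representation-theoretic character formulas directly and instead identify $H^*(X(\mathit{WP});\mathbb{Q})^W$ combinatorially. Lemma \ref{lem-T-coh-gen} tells us that the equivariant invariant subring $H^*_T(X(\mathit{WP});\mathbb{Q})^W$ is generated by the elements $\sum_{w\in W^i}X_{i,w}$, and the proof of Proposition \ref{lem-im=inv} shows these descend to generators of $H^*(X(\mathit{WP});\mathbb{Q})^W$. A $W$-invariant element of the Stanley--Reisner ring is a sum of $W$-orbit-sums of the standard monomial basis, so a basis for the invariant subring is indexed by $W$-orbits of cones of the Coxeter fan. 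The argument in Lemma \ref{lem-T-coh-gen} identifies each such orbit with a subset $\{i_1,\dots,i_k\}\subseteq[r]$ (the ``type'' of a face, recorded by the stabilizer $W_I$), and there are exactly $2^r$ subsets of $[r]$. After passing to ordinary cohomology by quotienting by the image of $\widetilde{\mathcal{J}}$, one checks the dimension is preserved, giving $2^r$.

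The main obstacle I anticipate lies in the second computation: making the count $\dim H^*(X(\mathit{WP});\mathbb{Q})^W = 2^r$ fully rigorous. One must be careful that the orbit-sum basis of the \emph{equivariant} invariant ring, indexed by subsets of $[r]$ via face types, really does pass to an honest $\mathbb{Q}$-basis of the \emph{ordinary} invariant cohomology, i.e.\ that imposing the linear relations $\widetilde{\mathcal{J}}$ does not cause collapses or impose extra relations beyond those accounted for. Equivalently, one needs that the surjection $\varphi$ does not decrease dimension, which is exactly the content being established. I would resolve this by noting that $\varphi$ is already known to be surjective (from Propositions \ref{lem-ker-IJ} and \ref{lem-im=inv}), so it suffices to show $\dim H^*(X(\mathit{WP});\mathbb{Q})^W \le 2^r$; combined with the independently computed $\dim H^*(X(P);\mathbb{Q}) = 2^r$ and surjectivity, this forces equality and hence that $\varphi$ is an isomorphism. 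The cleanest phrasing therefore proves the two dimensions are each $2^r$, with the left-hand count coming from the cube combinatorics of $P$ and the right-hand count coming from counting $W$-orbits of faces of the Coxeter fan, both of which are governed by subsets of $[r]$.
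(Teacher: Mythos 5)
Your computation of the left-hand side is correct and is essentially the paper's own argument: the odd cohomology of $X(P)$ vanishes, so $\dim H^*\left(X(P);\mathbb{Q}\right)$ equals the Euler characteristic, which is the number of maximal cones of the normal fan, i.e.\ the number of vertices of $P$, and this is $2^r$ by the cube combinatorics of \cite{BGHpolytope}. The gap lies entirely in the right-hand equality $\dim H^*\left(X(\mathit{WP});\mathbb{Q}\right)^W = 2^r$, and it is genuine. First, your character-averaging heuristic misidentifies the relevant count: the number of $W$-orbits of \emph{maximal} cones of the Coxeter fan is $1$, since $W$ acts simply transitively on the Weyl chambers, not $2^r$; the number $2^r$ counts $W$-orbits of cones of \emph{all} dimensions, which is a different quantity, and neither equals $\dim H^*\left(X(\mathit{WP});\mathbb{Q}\right)^W$ without an actual character computation. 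Second, your ``alternative'' combinatorial route conflates cones with monomials: the invariant subring $H^*_T\left(X(\mathit{WP});\mathbb{Q}\right)^W$ of the Stanley--Reisner ring has a basis of orbit-sums of monomials (a cone orbit together with an exponent vector), so it is infinite-dimensional; the quotient by $\widetilde{\mathcal{J}}$ therefore does not ``preserve dimension,'' and determining which dimension survives is exactly the problem to be solved, not a step one can wave through.

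Third, and fatally, your proposed resolution is circular. Surjectivity of $\varphi$ (Propositions \ref{lem-ker-IJ} and \ref{lem-im=inv}) yields only the upper bound $\dim H^*\left(X(\mathit{WP});\mathbb{Q}\right)^W \le \dim H^*\left(X(P);\mathbb{Q}\right) = 2^r$; an upper bound together with surjectivity cannot ``force equality,'' since the surjection could have a nonzero kernel, in which case the invariant ring would simply be smaller than $2^r$. A lower bound on the invariant dimension must come from an independent source, and that is precisely the content of the lemma: the paper supplies it by citing \cite[Theorem 3.5(ii)]{lehrer2008rational} (equivalently, Stembridge's computation \cite{stembridge1994some} that the invariant Poincar\'e polynomial is $(1+q)^r$, which the paper invokes in the proof of Theorem \ref{thm-noncry}). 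If you wish to avoid that citation, the correct alternative is the one indicated in the paper's remark following the lemma: use that $H^*\left(X(P);\mathbb{Q}\right)$ is a Poincar\'e duality algebra, observe that $W$ acts trivially on the top cohomology $H^{2r}\left(X(\mathit{WP});\mathbb{Q}\right)$ so that the target of $\varphi$ is nonzero in top degree, and conclude that the graded surjection $\varphi$ is injective because any nonzero ideal of a Poincar\'e duality algebra contains the socle. Either way, some input beyond surjectivity and the count for $P$ is indispensable, and your proposal as written does not provide it.
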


\begin{proof}
    As the odd-degree cohomology of $X(P)$ vanishes (see Theorem \ref{thm-pres-coh}), the leftmost term $\dim H^*\left(X(P) ; \mathbb{Q}\right)$ equals the Euler characteristic of $X(P)$. 
    It is well-known that the Euler characteristic of a toric variety $X(P)$ equals the number of full dimensional cones in the defining fan (see \cite[sections 11 and 12]{danilov1978geometry} or \cite[sections 3.2 and 4.5]{fulton1993introduction}). 
    This also equals the number of vertices of the polytope $P$, which is $2^r$ as $P$ is combinatorially equivalent to the $r$-dimensional cube. 
    This proves the first equality. 
    For the second equality, see \cite[Theorem 3.5(ii)]{lehrer2008rational}.
\end{proof}

\begin{remark}
    One can also use the fact that $H^*\left(X(P) ; \mathbb{Q}\right)$ is a Poincar\'e duality algebra to show that the surjection $\phi$ is indeed an isomorphism (cf., \cite{HMSS21-toric-orbifolds, Song22}).
    See, for example, \cite[Definition 10.4, Lemma 10.5]{abe19cohomology}.
\end{remark}

From the definition of $\phi$, one sees that the isomorphism $\varphi$ preserves the grading.
The proof of Theorem \ref{thm-main} is complete.

\section{A generalization to finite Coxeter groups} \label{sec-gen}

Let $W$ be a finite Coxeter group with the finite set $S=\{s_1, \cdots, s_r\}$ of simple reflections, that is, 
\[W=\left\langle s \in S \mid (s t)^{m_{s t}}=\mathrm{id} \text { for any } s, t \in S \text { with } m_{s t}<\infty\right\rangle,\]
where $m_{s s}=1$ for each $s \in S$, and $m_{s t}=m_{t s}$. It follows that $m_{s t}$ is precisely the order of the element $s t$. It is known that all finite Weyl groups have a structure of Coxeter group, but not all finite Coxeter groups can be realized as the Weyl group of some Lie algebra. For example, among the irreducible Coxeter groups, $H_3$, $H_4$, and dihedral group $I_2(m)$ for $m \ne 3, 4, 6$ are not Weyl groups (see \cite{Humphreys90}).

Any Coxeter group $W$ has a geometric representation on $V:=\bigoplus_{s \in S}\mathbb{R}\alpha_s$, which is defined by the action of each simple reflection $s \in S$,
$$
s(\lambda)=\lambda-2\left(\lambda, \alpha_s\right) \alpha_s , \quad \forall \lambda \in V,
$$
where the symmetric bilinear form $(-,-)$ on $V$ is determined by
$$
\left(\alpha_s, \alpha_t\right)=-\cos \frac{\pi}{m_{s t}}.
$$
Note that $\left(\alpha_s, \alpha_s\right)=1$. It follows that the symmetric bilinear form $(-,-)$ on $V$ is $W$-invariant for the geometric representation. It is known that for finite Coxeter group $W$,  the bilinear form $(-,-)$ is positive definite, hence it defines a $W$-invariant inner product on $V$. It is also know that the geometric representation is faithful for any Coxeter group, thus realizing a finite Coxeter group as a finite reflection group on a Euclidean space. We denote the dual basis of $\{\alpha_s, s\in S\}$ under the inner product $(-,-)$ as $\varpi_1, \dots, \varpi_r$. That is, $(\alpha_{s_i}, \varpi_j)=\delta_{ij}$, where $\delta_{ij}$ is the Kronecker delta.

We maintain the notation of $W_i$ to be the maximal parabolic subgroup of $W$ excluding $s_i$ and $W^i$ be the set of minimal representatives for the cosets $W / W_i$. 

Define the graded $\mathbb{R}$-algebra $R\left(P\right)$ to be
    \begin{equation} \label{eq-def-R(P)}
     R\left(P\right)=\mathbb{R}\left[x_1, \ldots, x_r, y_1, \ldots, y_r\right] /\left(\mathcal{I}+\mathcal{J}\right),
    \end{equation}
    where $\mathcal{I}$ is the ideal generated by
    \[\{x_i y_i \mid i \in [r]\},\]
    and $\mathcal{J}$ is the ideal generated by 
    \[\left\{\sum_{i=1}^r(\alpha_{s_j},  \varpi_i)x_i+\sum_{i=1}^r(\alpha_{s_j}, -\alpha_{s_i} ) y_i \Biggm\vert j \in [r]\right\}=\left\{x_j-\sum_{i=1}^r(\alpha_{s_j}, \alpha_{s_i} ) y_i \Biggm\vert j \in [r]\right\}.\]
    
Define the graded $\mathbb{R}$-algebra $R\left(\mathit{WP}\right)$ to be
    \begin{equation} \label{eq-def-R(WP)}
      R\left(\mathit{WP}\right)=\mathbb{R}\left[X_{i,w} \mid i \in [r], w \in W^i\right] / (\widetilde{\mathcal{I}}+ \widetilde{\mathcal{J}}),
    \end{equation}
    where $\widetilde{\mathcal{I}}$ is the ideal generated by
    \[\left\{ \prod_{1 \le j \le k} X_{i_j, w_j} \Biggm| k \in \mathbb{N}, i_j \in [r], w_j \in W^{i_j}, \bigcap_{j} w_{j} \widetilde{Q}_{i_j} = \emptyset \right\},\]
    and $\widetilde{\mathcal{J}}$ is the ideal generated by
    \[\left\{  \sum_{i \in [r], w \in W^i} (\alpha_j, w \varpi_i) X_{i,w} \Biggm| j \in [r]\right\}.\]

\begin{remark} \label{rmk-cry}
    When $W$ is a Weyl group, one can define a representation of $W$ over $\mathbb{Z}$ rather than over $\mathbb{R}$ by modifying the $\alpha_s$'s in the geometric representation with possibly different lengths (e.g., simple roots of non-simply-laced type Lie algebras). This integral representation can be realized on the Cartan subalgebra of the corresponding complex semisimple Lie algebra using the Cartan matrix. However, as $W$-representations, this representation is isomorphic to the geometric representation over $\mathbb{R}$. 
    It is not difficult to see that in this case, the above graded rings are isomorphic to base change of $H^*\left(X(P) ; \mathbb{Q}\right)$ and $H^*\left(X(\mathit{WP}) ; \mathbb{Q}\right)$, respectively. However, if a finite Coxeter group $W$ is not a Weyl group, then there is no $W$-invariant lattice in the geometric representation $V$. Consequently, there are no corresponding toric variety in this case, although these rings ``behave as the cohomology of a toric variety''. For example, McMullen \cite{mcmullen1989polytope} proved the polytope algebra of any complete simplicial fan (which is isomorphic to $R\left(P\right)$ and $R\left(\mathit{WP}\right)$ for the normal fan of $P$ and $\mathit{WP}$, respectively) satisfies the analog of the hard Lefschetz and Hodge--Riemann bilinear relations in Hodge theory for rationally smooth projective varieties, while Brion \cite{brion1997structure} proved these polytope algebras are Poincar\'e duality algebras and studied the Hodge structures of them.
\end{remark}

\begin{theorem} \label{thm-noncry}
   Let $\Phi$ be a reduced (not necessarily crystallographic) root system, let $W$ be the finite Coxeter group generated by the simple reflections along the simple roots, and let $P$ be the dominant weight polytope associated with any strongly dominant weight. Then there exists an explicit graded ring isomorphism
   \begin{equation} 
     R\left(P\right) \cong R\left(\mathit{WP}\right)^W.      
   \end{equation}

\end{theorem}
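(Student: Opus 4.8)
\textbf{Proof proposal for Theorem \ref{thm-noncry}.}

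The plan is to verify that the entire argument used to prove Theorem \ref{thm-main} goes through verbatim once the geometric inputs (cohomology of toric varieties) are replaced by their purely algebraic counterparts $R(P)$ and $R(\mathit{WP})$. The first observation is that the combinatorial skeleton of the proof never used crystallographicity: the dominant weight polytope $P$ is combinatorially a cube and the face structure of $\mathit{WP}$ is governed entirely by the reflection action of $W$. Indeed, Lemmas \ref{lem-stab} and \ref{lem-facet}, which are the combinatorial heart of the argument, rely only on the identities $(\alpha_{s_i}, \varpi_j) = \delta_{ij}$ and on the fact that $\lambda$ is strongly dominant, both of which hold for an arbitrary finite Coxeter group acting by its (faithful, positive-definite) geometric representation. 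Thus I would first record that these two lemmas remain valid in the present setting with identical proofs, replacing $\langle\,\cdot\,,\,\cdot\,\rangle$ by the inner product $(-,-)$ and the coweights $\varpi_i\spcheck$ by the $\varpi_i$.

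Next I would define the homomorphism $\phi\colon \mathbb{R}[x_1,\dots,x_r,y_1,\dots,y_r] \to R(\mathit{WP})$ by the same formulas as in \eqref{eq-def-of-phi}, where the coefficients $c_{i,j,w}$ are now determined by the decomposition $w\varpi_i - \varpi_i = \sum_{k} c_{k,i,w}\,(-\alpha_{s_k})$ with respect to the basis $\{-\alpha_{s_i}\}$ of $V$ (the analogue of \eqref{eq-cijw}, where $\eta(C_i)$ becomes $-\alpha_{s_i}$ and $\eta(\widetilde{Q}_i)$ becomes $\varpi_i$). The verifications that $\operatorname{im}\phi \subseteq R(\mathit{WP})^W$ and that $\mathcal{I} + \mathcal{J} \subseteq \ker\phi$ (Propositions \ref{lem-im-inv} and \ref{lem-ker-IJ}) are formal consequences of the $W$-equivariance of the pairing and of Lemma \ref{lem-facet}; the choice $q_i = c\varpi_i$ in Proposition \ref{lem-im-inv} is replaced by the observation that one may simply pair with $\varpi_i$ directly, since in the real setting there is no integrality obstruction. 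Likewise, the surjectivity argument of Proposition \ref{lem-im=inv} and the generation Lemma \ref{lem-T-coh-gen} use only the orbit combinatorics of $\mathit{WP}$ and the presentation of the Stanley--Reisner ring $\mathbb{R}[X_{i,w}]/\widetilde{\mathcal{I}}$, which continues to have the monomial basis indexed by cones of the (abstract) normal fan; hence $\phi$ induces a surjection $\varphi\colon R(P) \twoheadrightarrow R(\mathit{WP})^W$.

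The one place where the proof of Theorem \ref{thm-main} genuinely invoked the existence of a variety is the dimension count in Lemma \ref{lem-dim}: there the equality $\dim H^*(X(P)) = 2^r$ used the Euler-characteristic formula for toric varieties, and $\dim H^*(X(\mathit{WP}))^W = 2^r$ was quoted from Lehrer's representation-theoretic computation \cite{lehrer2008rational}, which is stated for Weyl groups. I expect this to be the main obstacle, and I would circumvent it as follows. On the $R(P)$ side, $\dim R(P) = 2^r$ can be proved directly from the presentation \eqref{eq-def-R(P)}: using the relations $\mathcal{J}$ to eliminate $x_j = \sum_i (\alpha_{s_j},\alpha_{s_i})\,y_i$, the algebra becomes $\mathbb{R}[y_1,\dots,y_r]$ modulo the squarefree-type relations coming from $\mathcal{I}$, and a Stanley--Reisner / polytope-algebra count (valid for any complete simplicial fan, e.g.\ via McMullen's or Brion's results cited in Remark \ref{rmk-cry}) gives the $h$-vector of the cube, whose sum is $2^r$. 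On the $R(\mathit{WP})^W$ side, rather than appealing to Lehrer, I would compute $\dim R(\mathit{WP})^W$ as the Poincaré polynomial of the $W$-invariants of the cohomology of the fan of $\mathit{WP}$; since $R(\mathit{WP})$ is the polytope algebra of a $W$-permutohedron, its $W$-invariant Poincaré series can be obtained combinatorially from the $W$-action on the faces (a Molien-type average over the descent statistics), again yielding $2^r$ without assuming $W$ is crystallographic. Since $\varphi$ is a surjection of finite-dimensional graded algebras of equal total dimension $2^r$, it is an isomorphism, and it manifestly preserves the grading. This completes the proof of Theorem \ref{thm-noncry}.
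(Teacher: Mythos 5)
Your overall strategy coincides with the paper's: rerun the proof of Theorem \ref{thm-main}, observing that Lemmas \ref{lem-stab}, \ref{lem-facet}, \ref{lem-T-coh-gen} and Propositions \ref{lem-im-inv}, \ref{lem-ker-IJ}, \ref{lem-im=inv} are purely combinatorial/algebraic and survive the replacement of $\langle\,\cdot\,,\,\cdot\,\rangle$, $\varpi_i\spcheck$, $-\beta_i$ by $(-,-)$, $\varpi_i$, $-\alpha_{s_i}$, and reducing everything to the two dimension counts $\dim R(P) = 2^r = \dim R(\mathit{WP})^W$. Your treatment of $\dim R(P)$ (the $h$-vector of the complete simplicial normal fan of a combinatorial cube, justified by Cohen--Macaulayness or by the McMullen/Brion results of Remark \ref{rmk-cry}) and your observation that one may take $q_i = \varpi_i$ directly, since over $\mathbb{R}$ there is no integrality constraint, both agree with what the paper does.

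The gap is the second dimension count. You replace Lehrer's Weyl-group computation by ``a Molien-type average over the descent statistics, again yielding $2^r$,'' but this asserts exactly the nontrivial fact rather than proving it. The descent statistic computes the Hilbert series of $R(\mathit{WP})$ itself --- the $W$-Eulerian polynomial $\sum_{w \in W} q^{\operatorname{des}(w)}$ --- not that of its invariant subring. To extract invariants via Molien's formula you need the trace of \emph{every} $w \in W$ on \emph{every} graded piece of $R(\mathit{WP})$; and $R(\mathit{WP})$, unlike the Stanley--Reisner ring $\mathbb{R}[X_{i,w}]/\widetilde{\mathcal{I}}$, is not a permutation module in each degree, being a quotient by the linear ideal $\widetilde{\mathcal{J}}$. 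One must first pass from the face-counting (permutation) character of the Stanley--Reisner ring to the character of $R(\mathit{WP})$ by a $W$-equivariant splitting argument (Cohen--Macaulayness plus averaging in characteristic $0$ gives $\mathbb{R}[X_{i,w}]/\widetilde{\mathcal{I}} \cong R(\mathit{WP}) \otimes \mathbb{R}[V]$ as graded $W$-modules), and then establish the identity
\[
\frac{1}{|W|}\sum_{w \in W} F_w(q)\,\det\bigl(1 - q\, w|_V\bigr) = (1+q)^r,
\]
where $F_w(q)$ is the generating function of the monomials fixed by $w$. This identity is the genuine content of the step, and it is precisely what the paper cites instead of proving: it is established for arbitrary finite Coxeter groups by Stembridge \cite[p.~258]{stembridge1994some}, the point of that reference being exactly that, unlike \cite{lehrer2008rational}, it does not assume crystallographicity. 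Without either carrying out this computation or invoking such a result, your final step does not close, and the rest of your argument (a surjection between graded algebras) cannot conclude that $\varphi$ is an isomorphism.
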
 

\begin{proof}
    The proof of Theorem \ref{thm-main} works for noncrystallographic root systems with suitable minor modifications. Here we only point out necessary modifications. 
    
    First, the $c_{i,j,w} \in \mathbb{Q}$ in Eq.~\eqref{eq-cijw} need be replaced by $c_{i,j,w} \in \mathbb{R}$. Then one can construct the map $\widetilde{\phi}: R\left(P\right) \longrightarrow R\left(\mathit{WP}\right)^W$ with the same definition in \eqref{eq-def-of-phi}. The proof of that the image of $\widetilde{\phi}$ is contained in $R\left(\mathit{WP}\right)^W$ is the same as the proof of Proposition \ref{lem-im-inv}. The proof of the kernel of $\widetilde{\phi}$ contains the ideals $\widetilde{\mathcal{I}}$ and $\widetilde{\mathcal{J}}$ is the same as the proof of Proposition \ref{lem-ker-IJ}. The proof of that the image of $\widetilde{\phi}$ is exactly $R\left(\mathit{WP}\right)^W$ is the same as the proof of Proposition \ref{lem-im=inv} because one can easily check that the proof of Lemma \ref{lem-T-coh-gen} works for the Stanley--Reisner ring $R\left(\mathit{WP}\right)=\mathbb{R}\left[X_{i,w} \mid i \in [r], w \in W^i\right] / \widetilde{\mathcal{I}}$. Finally we have an analog of Lemma \ref{lem-dim}, that is, 
     $$\dim R\left(P\right) = 2^r =\dim R\left(\mathit{WP}\right)^W.$$
     The first equality follows from a simple computation of the $h$-vector of the simplicial normal fan of $P$ and the fact that $P$ is combinatorially equivalent to the $r$-dimensional cube, while the second one is proved in \cite[p.\ 258]{stembridge1994some}. More precisely, both    $R\left(P\right)$ and  $R\left(\mathit{WP}\right)^W$ has the same Poincar\'e polynomial $(1+q)^r$. The proof is completed. 
\end{proof} 

\begin{remark}
    For a comparison, note that the Poincar\'e polynomial of $R\left(\mathit{WP}\right)$, which is the Poincar\'e polynomial of the cohomology of the toric variety $X(\mathit{WP})$ in the crystallographic case, is the $h$-polynomial of $\mathit{WP}$,
    given by
$$
h_W(q)=\sum_{w \in W} q^{\operatorname{des}(w)},
$$
where $\operatorname{des}(w)$ denotes the number of (right) descents of the element $w$. See, for example, \cite{zbMATH03869390}. This polynomial is known as the $W$-Eulerian polynomial, which does not only depend on the rank of $W$.

\end{remark}

\bibliographystyle{amsplain}
\bibliography{toric-symm}

\end{document}